\documentclass[a4paper,11pt, reqno]{amsart} 
\usepackage{mathptmx}
\usepackage{tabularx}
\usepackage{booktabs}
\usepackage{multirow}
\usepackage{amsmath}
\usepackage[T1]{fontenc}
\usepackage{lmodern}
\usepackage[utf8]{inputenc}
\usepackage{mathtools} 

\usepackage{amssymb}
\usepackage{amsfonts}
\usepackage{fancyhdr}
\usepackage{graphicx}
\usepackage{hyperref}
\usepackage{enumerate}
\usepackage{amsthm}
\usepackage{tikz}
\usetikzlibrary{patterns}


\rfoot{\thepage}
\pagenumbering{arabic}
\usepackage[margin=1in]{geometry}

\newtheorem{theorem}{Theorem}[section]

\newtheorem{proposition}[theorem]{Proposition}

\newtheorem{definition}[theorem]{Definition}




\newcommand{\R}{\mathbb R}
\newcommand{\Z}{\mathbb Z}

\newcommand{\si}{\sigma}

\newcommand{\la}{\lambda}
\newcommand{\C}{\mathbb C }

\newcommand{\pa}{\partial }

\newcommand{\Hc}{\mathcal H}

\newcommand{\om}{ \omega}

\newcommand{\D}{\Delta}
\newcommand{\ap}{\alpha}
\newcommand{\bt}{\beta}

\newcommand{\Gm}{\Gamma}
\newcommand{\gm}{\gamma}

\newcommand{\re}{\operatorname{Re}}
\newcommand{\im}{\operatorname{Im}}

\newcommand{\ds}{\displaystyle}
\newcommand{\sar}{S^a_{rad}}
\newcommand{\sa}{S^a}
\usepackage{tikz}


\title[Characterization of eigenfunctions of the Laplacian having exponential growth]
{Characterization of eigenfunctions of the Laplacian having exponential growth}

\author[Basil Paul, Pradeep B]{Basil Paul, Pradeep Boggarapu}
\address[Basil Paul]{Department of Mathematics\\
		BITS Pilani K K Birla Goa Campus\\
		Zuarinagar, South Goa\\
		403 726, Goa, India}
\email{basilpaul9192@gmail.com}

\address[Pradeep B.]{Department of Mathematics\\
		BITS Pilani K K Birla Goa Campus\\
		Zuarinagar, South Goa\\
		403 726, Goa, India}
\email{pradeepb@goa.bits-pilani.ac.in}


\keywords{Laplacian, Spherical Fourier transform, Eigenfunction, Euclidean spherical functions}
\subjclass[2020]{Primary: 42B10; Secondary: 46E10.}

\begin{document}

	\begin{abstract}
		In 1993, Robert Strichartz proved a characterization for the bounded eigenfunctions of  Laplacian $\D=-\sum_{j=1}^d \frac{\pa^2}{\pa x_j^2} $ on $\R^d$: If $\left\{f_k \right\}_{k\in \Z}$ be a doubly infinite sequence of functions on $\R^d$ such that $\Delta f_k=f_{k+1}$  and $ \|f_k\|_{L^{\infty}(\R^d)} \leq C$ for all $ k \in \Z$, for some $C>0$,  then $f_0$ is an eigenfunction of $\Delta$. Observing the existence of unbounded eigenfunctions of Laplacian, Howard, and Reese generalized Strichartz's theorem to characterize eigenfunctions of Laplacian having at most polynomial growth. In this article, we shall prove an extended version of Strichartz's theorem to characterize eigenfunctions of the Laplacian having exponential growth.
	\end{abstract}
	\maketitle
	\section{Introduction}\label{section1}

	It is worthwhile to observe that a function on the real line with the property that all its derivatives and anti-derivatives are uniformly bounded must be a linear combination of $\sin x$ and $\cos x$. Taking this fact into account, John Roe \cite{Roe} proved the following characterization for sine functions $a sin(x + q)$ in terms of the size of their derivatives and anti-derivatives.
	
	\begin{theorem}[Roe] \label{th:roe}
		Let $\left\{f_{n}\right\}_{n=-\infty}^{\infty}$ be a doubly infinite sequence of real-valued functions of a real variable with
		\[f_{n+1}(x)=\frac{d}{d x} f_{n}(x).\]
		Suppose further that there exists a real $M$ such that $|f_{n}(x)| \leq M$  for all $ n \in \Z$ and $x \in \R.$ Then, $f_{0}(x)=a \sin (x+\phi)$ for some real constants $a$ and $\phi$. 
	\end{theorem}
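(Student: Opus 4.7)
The plan is to extend each $f_n$ to an entire function of exponential type at most $1$ and then show that the distributional Fourier transform $\widehat{f_0}$ is supported on the two-point set $\{-1,1\}$, from which the desired form of $f_0$ follows at once. Since $|f_n(x)| = |f_0^{(n)}(x)| \le M$ for every $n \ge 0$ and $x \in \R$, the Taylor series $f_0(x+h) = \sum_{n=0}^\infty f_n(x)\, h^n/n!$ converges for all $h \in \C$ and gives $|f_0(z)| \le M e^{|\im z|}$; the same argument applies to every $f_n$, $n \in \Z$. Paley--Wiener--Schwartz then places $\widehat{f_n}$ as a compactly supported distribution in $[-1,1]$, and $\widehat{f_n}(\xi) = (i\xi)^n \widehat{f_0}(\xi)$ for all $n \in \Z$.

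The crux of the proof is to sharpen this support to $\{-1,1\}$. Fix $a \in (0,1)$ and pick $\psi \in C_c^\infty(\R)$ with $\psi \equiv 1$ on $[-a,a]$ and $\operatorname{supp}\psi \subseteq [-r,r]$ for some $r \in (a,1)$. Define $G_n := \check\psi * f_{-n}$ for $n \ge 0$, where $\check\psi := \mathcal F^{-1}\psi \in \mathcal S(\R)$. Then (i) $\|G_n\|_\infty \le M\|\check\psi\|_{L^1}$ uniformly in $n$; (ii) $G_n' = \check\psi * f_{-n+1} = G_{n-1}$ by differentiation under the convolution; (iii) $\widehat{G_n} = \psi\,\widehat{f_{-n}}$ is supported in $[-r,r]$, so by Paley--Wiener--Schwartz each $G_n$ is entire of exponential type at most $r$. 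Bernstein's inequality therefore yields the contraction
\[
\|G_{n-1}\|_\infty = \|G_n'\|_\infty \le r\,\|G_n\|_\infty,
\]
and iterating gives $\|G_0\|_\infty \le r^n M \|\check\psi\|_{L^1} \to 0$ as $n \to \infty$ (using $r<1$). Hence $G_0 = \check\psi * f_0 \equiv 0$, i.e.\ $\psi\,\widehat{f_0} = 0$; since $\psi \equiv 1$ on $[-a,a]$ with $a \in (0,1)$ arbitrary, $\widehat{f_0}$ vanishes on $(-1,1)$ and must be supported in $\{-1,1\}$.

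To finish, every distribution supported on the two-point set $\{-1,1\}$ is a finite sum $\widehat{f_0} = \sum_k \bigl(a_k \delta_1^{(k)} + b_k \delta_{-1}^{(k)}\bigr)$, whose inverse Fourier transform is $P_+(x)e^{ix} + P_-(x)e^{-ix}$ for polynomials $P_\pm$. The hypothesis $\|f_0\|_\infty \le M$ forces each $P_\pm$ to be constant (nonconstant terms would produce unbounded contributions $x^k e^{\pm ix}$), and the real-valuedness of $f_0$ forces $P_- = \overline{P_+}$. Therefore $f_0(x) = \alpha\cos x + \beta\sin x$ for some $\alpha,\beta \in \R$, which rewrites as $a\sin(x+\phi)$ for suitable real $a, \phi$.

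The step I expect to be delicate is the Bernstein contraction. Applying Bernstein directly to $f_0$ only gives $\|f_{n-1}\|_\infty \le 1\cdot\|f_n\|_\infty$, which is useless at exponential type exactly $1$; the trick is to first mollify $f_0$ with $\check\psi$ in order to strictly shrink the exponential type below $1$, after which the geometric factor $r^n \to 0$ becomes lethal. The remaining ingredients --- the Paley--Wiener extension of each $f_n$ and the classification of distributions supported on a finite set --- are standard harmonic analysis.
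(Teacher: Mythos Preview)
The paper does not actually prove Theorem~\ref{th:roe}; it is quoted from Roe's original 1980 paper as background and motivation, so there is no ``paper's own proof'' to compare against. That said, your argument is correct and self-contained. One small imprecision: the identity $\widehat{f_n}=(i\xi)^n\widehat{f_0}$ is not literally valid for $n<0$ (division by $i\xi$ at $\xi=0$), but you never use it in that direction---what you need, and what you do establish, is that each $f_{-n}$ is itself bounded and entire of exponential type~$1$, so $\widehat{f_{-n}}$ is supported in $[-1,1]$ and the mollification step goes through.

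It is worth contrasting your route with the machinery the paper develops for its own results (Theorems~\ref{Theorem 2.1} and~\ref{Theorem 4.1}). There the authors work in the dual of a Schwartz-type space, transfer the recursion to the Fourier side as $(\lambda^2-z_0)^k\mathcal H T_0 = A^k \mathcal H T_k$, and kill the support away from $\lambda_0$ by estimating seminorms of the multiplier $\bigl(\tfrac{A}{\lambda^2-z_0}\bigr)^k(\lambda^2-\lambda_0^2)^{N+1}$; a separate ``Step~2'' then reduces the order $N$ to~$0$. Your approach replaces this multiplier analysis by a single clean device: mollify to strictly lower the exponential type to $r<1$ and let Bernstein's inequality supply the geometric factor $r^n$. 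This is more elementary and exploits the specific feature of Roe's setting (bounded functions, spectral edge exactly at $|\xi|=1$) that the paper's framework, aimed at genuine exponential growth and complex eigenvalues on a parabola, cannot use. Conversely, the paper's seminorm/multiplier method is what allows their argument to handle $\im\lambda_0\neq 0$ and the full distributional generality.
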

	
	In the article \cite{howard}, Ralph Howard proved a general version of Roe's theorem where, it was  showed that the bounds $\left|f_{n}(x)\right| \leq M$ can be relaxed to $\left|f_{n}(x)\right| \leq M_n(1+|x|)^m$ with $m\geq 1$ and where the constants only need to have subexponential growth.
	\begin{theorem}[Howard]\label{th:howard1} 
		Let $\left\{f_{n}\right\}_{n=-\infty}^{\infty}$ be a doubly infinite sequence of complex valued functions defined on the real numbers with
		$$ f_{n+1}(x)=\frac{d}{d x} f_{n}(x) $$
		and so that there are constants $M_n \geq 0, \alpha \in[0,1)$, and a nonnegative integer $k$ satisfying
		$$ \left|f_{n}(x)\right| \leq M_n(1+|x|)^{k+\alpha}.	$$
		
		If	$$	\varliminf_{n \rightarrow \infty} \frac{M_n}{(1+\varepsilon)^n}=0 \quad \text {for all } \varepsilon>0	$$
		and
		$$ \varliminf_{n \rightarrow \infty} \frac{M_{-n}}{(1+\varepsilon)^n}=0 \quad \text {for all } \varepsilon>0 , $$	then
		$$	f_{0}(x)=p(x) e^{i x}+q(x) e^{-i x}	$$
		where $p(x)$ and $q(x)$ are polynomials of degree at most $k$.
	\end{theorem}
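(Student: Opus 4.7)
The plan is to treat $f_0$ as a tempered distribution (justified by the bound $|f_0(x)| \le M_0(1+|x|)^{k+\alpha}$) and show that its Fourier transform $\widehat{f_0}$ is supported on the two-point set $\{-1,1\}$. The recursion $f_{n+1} = f_n'$ gives, for every $n \ge 0$, the exact distributional identities
\[
\widehat{f_n}(\xi) = (i\xi)^n \widehat{f_0}(\xi), \qquad \widehat{f_0}(\xi) = (i\xi)^n \widehat{f_{-n}}(\xi),
\]
which I would combine with the subexponential growth hypotheses on $\{M_n\}$ and $\{M_{-n}\}$ to pin down the support.

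For the exterior $\{|\xi|>1\}$, fix $\varepsilon > 0$ and $\varphi \in \mathcal{S}(\mathbb{R})$ with $\operatorname{supp}\varphi \subset \{|\xi| \ge 1+\varepsilon\}$, and write $\langle \widehat{f_0}, \varphi \rangle = \langle f_n, \mathcal{F}^{-1}[\varphi/(i\xi)^n]\rangle$. Repeated integration by parts in the Fourier integral yields the pointwise estimate $|\mathcal{F}^{-1}[\varphi/(i\xi)^n](x)| \le C_N\, n^N (1+\varepsilon)^{-n} (1+|x|)^{-N}$ for any $N$; choosing $N > k+\alpha+1$ and combining with $|f_n(x)| \le M_n(1+|x|)^{k+\alpha}$ gives $|\langle \widehat{f_0}, \varphi\rangle| \le C\, M_n n^N(1+\varepsilon)^{-n}$. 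Since the left-hand side is independent of $n$ and the right-hand side admits a subsequence tending to zero (by the hypothesis on $\{M_n\}$, with $\eta > 0$ chosen so $1+\eta < 1+\varepsilon$), we conclude $\langle \widehat{f_0}, \varphi \rangle = 0$. A symmetric argument with $\varphi$ supported in $\{|\xi| \le 1-\varepsilon\}$ and the dual identity $\langle \widehat{f_0}, \varphi \rangle = \langle f_{-n}, \mathcal{F}^{-1}[(i\xi)^n\varphi]\rangle$ (exploiting $|(i\xi)^n\varphi(\xi)| \le (1-\varepsilon)^n|\varphi(\xi)|$ and the hypothesis on $\{M_{-n}\}$) shows that $\widehat{f_0}$ vanishes on $\{|\xi|<1\}$.

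Hence $\operatorname{supp}\widehat{f_0} \subset \{-1,1\}$, and the structure theorem for distributions supported at finitely many points writes $\widehat{f_0} = \sum_{j=0}^{N_1} a_j \delta^{(j)}_1 + \sum_{j=0}^{N_2} b_j \delta^{(j)}_{-1}$, whose inverse Fourier transform has the form $f_0(x) = p(x)e^{ix} + q(x)e^{-ix}$ for polynomials $p,q$. To obtain $\deg p, \deg q \le k$: if either had degree $m \ge k+1$ with leading coefficients $a,b$, then
\[
|f_0(x)|^2 = |p(x)|^2 + |q(x)|^2 + 2\operatorname{Re}(p(x)\overline{q(x)}e^{2ix}),
\]
and integrating over $[R,2R]$ the leading term $(|a|^2+|b|^2)x^{2m}$ contributes order $R^{2m+1}$ while the oscillatory cross term contributes only $O(R^{2m})$ by integration by parts; this contradicts $\int_R^{2R}|f_0|^2 \le C R^{2(k+\alpha)+1}$ since $m > k+\alpha$.

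I expect the main technical hurdle to be the Schwartz-seminorm bookkeeping in the two decay estimates. Applying $N$ $\xi$-derivatives under the Fourier integral to $\varphi/(i\xi)^n$ or $(i\xi)^n \varphi$ via Leibniz produces sums $\sum_{j=0}^{N} \binom{N}{j}\tfrac{n!}{(n-j)!}\|\varphi^{(N-j)}\|_\infty (1\pm\varepsilon)^{\pm(n-j)}$, which grow only polynomially in $n$ at rate $n^N$; this polynomial cost is swamped by the geometric factor $(1\pm\varepsilon)^{\pm n}$ once $\eta$ is chosen small, but organizing this interaction uniformly in the compact $\xi$-support of $\varphi$, and verifying that the $\liminf$ hypothesis is strong enough to absorb the $n^N$ factor along the selected subsequence, is where the precise form of the subexponential assumption on $\{M_{\pm n}\}$ is crucially exploited.
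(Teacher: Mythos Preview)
The paper does not prove this theorem: Theorem~\ref{th:howard1} is stated in the introduction as background, attributed to Howard \cite{howard}, and no proof is given anywhere in the manuscript. So there is no ``paper's own proof'' to compare against.

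That said, your proposal is a sound and essentially complete proof of the stated result. The Fourier-support argument is the standard route: the identities $\widehat{f_n}=(i\xi)^n\widehat{f_0}$ and $\widehat{f_0}=(i\xi)^n\widehat{f_{-n}}$ combined with the subexponential growth of $M_{\pm n}$ force $\operatorname{supp}\widehat{f_0}\subset\{-1,1\}$, and the structure theorem then yields $f_0(x)=p(x)e^{ix}+q(x)e^{-ix}$. Your handling of the $\liminf$ hypothesis is correct: choosing $0<\eta<\varepsilon$ and writing $M_n n^N(1+\varepsilon)^{-n}=[M_n(1+\eta)^{-n}]\cdot[n^N((1+\eta)/(1+\varepsilon))^n]$ makes the second factor decay geometrically, absorbing the $n^N$, while the first factor goes to zero along a subsequence. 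The degree bound via $\int_R^{2R}|f_0|^2$ is also fine; just be slightly careful in the write-up that the argument covers the case $\deg p\neq\deg q$ (it does, since $|p|^2+|q|^2\sim c\,x^{2m}$ with $m=\max(\deg p,\deg q)$ and $c>0$, while the oscillatory cross term is one order lower after a single integration by parts).
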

	
	Let $\D = - \sum_{j=1}^{d} \frac{\partial^2}{\partial x^2_j}$ be the Laplacian operator on $\R^d$. In 1993, Robert Strichartz \cite{Str} studied a $d$-dimensional generalization of Roe's theorem \cite{Roe}, where $\frac{d}{dx}$ is replaced by the Laplacian on $\mathbb{R}^d$ and a characterization of bounded eigenfunctions of the Laplacian with eigenvalue $\ap>0$ was obtained.
	
	\begin{theorem}[Strichartz]\label{th:str}
		Let $\left\{f_k \right\}_{k\in \Z}$ be a doubly infinite sequence of functions on $\R^d$ satisfying
		$\Delta f_k=\alpha f_{k+1}$ for some $\alpha$>0, for all $k \in \Z$. If $ \|f_k\|_{L^{\infty}(\R^d)} \leq C$ for all $ k \in \Z$, for some $C>0$ ,  then $\Delta f_0=\alpha f_0$.
	\end{theorem}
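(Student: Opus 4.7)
The plan is to pass to the distributional Fourier transform and show that $\hat{f}_0$ is concentrated on the sphere $S_\alpha=\{\xi\in\R^d:|\xi|^2=\alpha\}$; combined with $|\xi|^2\hat{f}_0=\alpha\hat{f}_1$ this will force $\Delta f_0=\alpha f_0$. Each $f_k\in L^\infty(\R^d)$ defines a tempered distribution, and the hypothesis $\Delta f_k=\alpha f_{k+1}$ becomes $|\xi|^2\hat{f}_k=\alpha\hat{f}_{k+1}$ in $\mathcal{S}'(\R^d)$. Iterating yields
\[
\hat{f}_k=\alpha^{-k}|\xi|^{2k}\hat{f}_0\quad(k\geq 0),\qquad |\xi|^{2k}\hat{f}_{-k}=\alpha^k\hat{f}_0\quad(k\geq 0),
\]
the latter meaningful for $\xi\neq 0$.

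For the exterior, fix $\eps>0$ and a Schwartz function $\phi$ with $\hat{\phi}$ supported in $\{|\xi|\geq\sqrt{\alpha}(1+\eps)\}$. Define $\psi_k\in\mathcal{S}(\R^d)$ by $\hat{\psi}_k=|\xi|^{-2k}\hat{\phi}$, which is Schwartz since $\hat{\phi}$ avoids a neighborhood of the origin, and observe that $\Delta^k\psi_k=\phi$. Using $\Delta^k f_0=\alpha^k f_k$ in the distributional sense together with the uniform $L^\infty$ bound,
\[
\left|\int_{\R^d}f_0\,\phi\,dx\right|=\alpha^k\left|\int_{\R^d}f_k\,\psi_k\,dx\right|\leq C\alpha^k\|\psi_k\|_1.
\]
Since $|\xi|^{-2k}\leq(\alpha(1+\eps)^2)^{-k}$ on the support of $\hat{\phi}$, the Schwartz seminorms of $\psi_k$, and in particular $\|\psi_k\|_1$, decay like $\alpha^{-k}(1+\eps)^{-2k}$, so the bound tends to zero and $\hat{f}_0$ vanishes on $\{|\xi|>\sqrt{\alpha}\}$. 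A symmetric argument using the backward iteration, with $\phi$ supported in $\{0<|\xi|<\sqrt{\alpha}(1-\eps)\}$ and $\psi_k=\alpha^{-k}\Delta^k\phi$ whose Fourier transform decays like $(1-\eps)^{2k}$, together with boundedness of $f_{-k}$, shows $\hat{f}_0$ vanishes on $\{0<|\xi|<\sqrt{\alpha}\}$. A short separate argument rules out any contribution at $\xi=0$: such a contribution corresponds to a polynomial component in $f_0$, necessarily a constant $c$ by boundedness, but then the relation $\Delta f_{-1}=\alpha c$ forces $f_{-1}$ to contain an unbounded quadratic term unless $c=0$.

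With $\hat{f}_0$ (and by the same argument $\hat{f}_1$) supported on $S_\alpha$, the identity $|\xi|^2\hat{f}_0=\alpha\hat{f}_1$ restricted to the sphere reduces to $\alpha\hat{f}_0=\alpha\hat{f}_1$; hence $f_0=f_1$ and $\Delta f_0=\alpha f_1=\alpha f_0$. The main obstacle I anticipate is the quantitative control of $\|\psi_k\|_1$ uniformly in $k$: one must convert pointwise decay of $|\xi|^{-2k}\hat{\phi}$ on its support into $L^1$-decay of $\psi_k$ on the physical side via Schwartz-seminorm estimates that absorb the factor $\alpha^k$ in the bound. A secondary subtlety is justifying that boundedness of $f_0$ and $f_1$ forces $\hat{f}_0$ to be of \emph{order zero} transverse to $S_\alpha$, so that the restriction argument in the final step is legitimate; this can be handled by noting that any normal-derivative contribution supported on $S_\alpha$ would, after inverse Fourier transform, give rise to an unbounded term in $f_0$ by stationary phase.
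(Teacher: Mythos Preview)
The paper does not itself prove this theorem; it is quoted as a known result of Strichartz and serves only as motivation. That said, the paper's proof of its own Theorem~2.2 (for exponential-type distributions) specializes, with $a=0$ and $z_0$ real, to a proof of Strichartz's theorem, so one can compare your outline against that.

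Your Step~1---showing that $\hat f_0$ is supported on the sphere $S_\alpha$ by testing against Schwartz functions whose Fourier transforms live outside or inside the sphere and using the forward/backward iterates---is correct and is exactly the mechanism behind Step~1 of the paper's Theorem~2.2 (there phrased as $(\lambda^2-\lambda_0^2)^{N+1}\mathcal H T_0=0$). Your worry about controlling $\|\psi_k\|_1$ is well placed but routine: differentiating $|\xi|^{\pm 2k}$ produces only polynomial factors in $k$, which are beaten by the geometric factor $(1\pm\varepsilon)^{\mp 2k}$.

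The genuine gap is in your final step. Knowing that $\hat f_0$ is supported on $S_\alpha$ gives only $(\Delta-\alpha)^{N+1}f_0=0$ for some $N$, and you propose to force $N=0$ by stationary phase: ``any normal-derivative contribution \dots\ would give rise to an unbounded term in $f_0$.'' This is false in dimension $d\ge 3$. For example, with $d=3$ the distribution $\partial_r\delta_{S_\alpha}$ (transverse order one) has inverse Fourier transform proportional to $\cos(\sqrt{\alpha}\,|x|)+O(|x|^{-1})$, which is bounded; more generally the inverse transform of an order-$m$ layer behaves like $|x|^{m-(d-1)/2}$, so boundedness of $f_0$ and $f_1$ alone does not rule out $m=1,\dots,\lfloor (d-1)/2\rfloor$. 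You must use the uniform bound on \emph{all} $f_k$, not just two of them.

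The clean fix---and this is precisely Step~2 in the paper's proof of Theorem~2.2---is algebraic. Once $(\Delta-\alpha)^{N+1}f_0=0$, the span of $\{f_k:k\ge 0\}$ is finite-dimensional. If $(\Delta-\alpha)f_0\neq 0$, choose $k_0$ maximal with $(\Delta-\alpha)^{k_0}f_0\neq 0$ and set $g=(\Delta-\alpha)^{k_0-1}f_0=\sum_{j}a_jf_j$. Then $(\Delta-\alpha)^2g=0$, so expanding $\Delta^k g=((\Delta-\alpha)+\alpha)^k g$ binomially gives
\[
\Delta^k g=\alpha^k g+k\,\alpha^{k-1}(\Delta-\alpha)g,
\]
while on the other hand $\Delta^k g=\alpha^k\sum_j a_j f_{j+k}$. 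Solving for $(\Delta-\alpha)g$ and using $\|f_{j+k}\|_\infty\le C$ yields $\|(\Delta-\alpha)g\|_\infty\le \tfrac{\alpha}{k}\big(C\sum|a_j|+\|g\|_\infty\big)\to 0$, a contradiction. This replaces your stationary-phase step entirely and closes the argument.
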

	
	Strichartz also remarked the above result can be extended for $L^p$ norms, where $p> \frac{2d}{d-1}$ instead of the $L^\infty$ norm. Meanwhile, observing the existence of eigenfunctions of Laplacian which are unbounded, Howard and Reese \cite{Howard} extended the above theorem to characterize eigenfunctions of Laplacian having at most polynomial growth.

	\begin{theorem}[Howard and Reese]\label{th:howard2}
		Let $a \geq 0$ and let $\{ f_k\}_{k=-\infty}^{\infty}$ be a doubly infinite sequence of complex-valued functions on $\mathbb{R}^d$ that satisfy
		$$ \Delta f_k=f_{k+1} $$
		and
		\begin{equation}\label{eq:pg} \left|f_k(x)\right| \leq M_k(1+|x|)^a \end{equation}
		where the constants $M_k$ have sublinear growth:
		$$ \lim _{k \rightarrow \infty} \frac{M_k}{k}=\lim _{k \rightarrow \infty} \frac{M_{-k}}{k}=0
		$$
		Then $\Delta f_0=f_0$.
	\end{theorem}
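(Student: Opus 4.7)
The plan is to pass to the Fourier side. By \eqref{eq:pg} each $f_k$ is a tempered distribution, so its Fourier transform $\hat f_k \in \mathcal{S}'(\R^d)$ is well-defined, and the recursion $\Delta f_k = f_{k+1}$ becomes $|\xi|^2 \hat f_k = \hat f_{k+1}$, i.e., $\hat f_k = |\xi|^{2k}\hat f_0$ for all $k\in\Z$. The pointwise bound $|f_k(x)| \le M_k(1+|x|)^a$ translates into the distributional estimate
\[ |\langle \hat f_k, \varphi\rangle| \;\le\; M_k\int_{\R^d}(1+|x|)^a|\hat\varphi(x)|\,dx \;\le\; C\, M_k\|\varphi\|_\sigma, \qquad \varphi\in\mathcal{S}(\R^d), \]
for a fixed Schwartz seminorm $\|\cdot\|_\sigma$ depending only on $a$ and $d$.

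The first task is to show $\operatorname{supp}(\hat f_0)\subseteq\{|\xi|=1\}$. For $\psi \in C_c^\infty(\R^d)$ with $\operatorname{supp}(\psi)\subset\{|\xi|>1+\eps\}$, one writes $\langle \hat f_0,\psi\rangle = \langle \hat f_k, |\xi|^{-2k}\psi\rangle$ for $k\ge 0$. Since the derivatives of $|\xi|^{-2k}$ are polynomials in $k$ times $|\xi|^{-2k-|\alpha|}$ and $|\xi|^{-2k}\le(1+\eps)^{-2k}$ on the support of $\psi$, the Schwartz seminorms of $|\xi|^{-2k}\psi$ satisfy $\||\xi|^{-2k}\psi\|_\sigma \le C k^{s}(1+\eps)^{-2k}\|\psi\|_{\sigma'}$. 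Combined with the sublinear hypothesis $M_k/k\to 0$, letting $k\to\infty$ gives $\langle \hat f_0, \psi\rangle=0$. The symmetric case $\operatorname{supp}(\psi)\subset\{|\xi|<1-\eps\}$ is handled analogously using the identity $\hat f_0 = |\xi|^{2k}\hat f_{-k}$, where $|\xi|^{2k}$ is now a polynomial (so $|\xi|^{2k}\psi\in C_c^\infty$) bounded by $(1-\eps)^{2k}$ on the support. Hence $\hat f_0$ is supported on the unit sphere.

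Next, the structure theorem for distributions on a smooth hypersurface yields, in polar coordinates $\xi=r\omega$ with $r>0$ and $\omega\in S^{d-1}$, the decomposition
\[ \hat f_0 \;=\; \sum_{j=0}^{N} T_j \otimes \delta^{(j)}(r-1) \]
for some finite $N$ and distributions $T_j \in \mathcal{D}'(S^{d-1})$. Multiplying by $r^{2k}=\sum_{p=0}^{2k}\binom{2k}{p}(r-1)^p$ and using $(r-1)^p\delta^{(j)}(r-1)=(-1)^p\tfrac{j!}{(j-p)!}\delta^{(j-p)}(r-1)$ expresses $\hat f_k$ as a combination of $\delta^{(i)}(r-1)$ components with coefficients polynomial in $k$. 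Testing $\hat f_k$ against $(r-1)^{N-1}\eta(r)g(\omega)$ with $\eta\in C_c^\infty((\tfrac12,\tfrac32))$, $\eta(1)\ne 0$, and $g\in C^\infty(S^{d-1})$ isolates the contributions of $T_{N-1}$ and $T_N$: the $T_N$ term appears with a coefficient linear in $k$, while the total pairing must be $O(M_k)=o(k)$; this forces $T_N\equiv 0$. Iterating downward kills every $T_j$ with $j\ge 1$, leaving $\hat f_0 = T_0\otimes\delta(r-1)$. Since $|\xi|^2\equiv 1$ on this support, $|\xi|^2\hat f_0 = \hat f_0$ and therefore $\hat f_1=\hat f_0$, i.e., $\Delta f_0=f_0$.

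The main obstacle is the inductive elimination of the $T_j$'s in the last step: the test functions must be chosen so as to isolate one normal-derivative component at a time, and the binomial coefficients $\binom{2k}{p}$ are the precise source of polynomial-in-$k$ growth that the hypothesis $M_k/k\to 0$ is tailored to exclude. This threshold is sharp, since were $M_k$ allowed to grow linearly, a nontrivial $T_1$ could persist and $f_0$ would fail to be an eigenfunction.
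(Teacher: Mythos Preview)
The paper does not actually prove this statement: Theorem~\ref{th:howard2} is quoted from Howard and Reese \cite{Howard} as background motivation, and no proof (or sketch) appears anywhere in the text. There is therefore nothing in the paper to compare your argument against directly.

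That said, your Fourier-analytic argument is essentially correct, and its architecture is close in spirit to the proof the paper gives for its own main result, Theorem~\ref{Theorem 2.1}. Your two steps --- first confining $\hat f_0$ to the unit sphere, then using the sublinear bound on $M_k$ to kill the higher-order normal components $T_j$, $j\ge 1$, in the transverse expansion --- mirror the paper's Step~1 (showing $(\lambda^2-\lambda_0^2)^{N+1}\mathcal H T_0=0$, i.e.\ a finite-order jet at a point) and Step~2 (driving $N$ down to $0$). The principal difference is that the paper works with the spherical Fourier transform on the spaces $S^a(\R^d)$ and reduces to the radial case by a radialisation/translation trick, whereas you use the ordinary Fourier transform on $\mathcal S'(\R^d)$ together with the structure theorem for distributions supported on a smooth hypersurface. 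Your route is more direct in the polynomial-growth setting, while the paper's machinery is tailored to the exponential-growth regime where ordinary tempered distributions are not available.

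One small point to make explicit in the inductive elimination: after $T_N=0$ is established, the expansion of $\hat f_0$ has normal order at most $N-1$, so testing against $(r-1)^{N-2}\eta(r)g(\omega)$ again produces a term linear in $k$ carried by $T_{N-1}$, and the descent continues. Also note that in the support step the exponential factor $(1\pm\eps)^{\mp 2k}$ already dominates any polynomial growth of $M_k$, so the sublinear hypothesis is only genuinely needed in the second step --- consistent with your closing remark on sharpness.
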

	
	In the same article \cite{Howard}, the authors mentioned how unlikely it is to characterize eigenfunctions of Laplacian having exponential growth, which is the primary motivation behind this work. As an example of such eigenfunctions, consider two important class of functions - first one being known as the Euclidean spherical functions denoted by $\phi_{\la}$ and defined by 
	
	$$	\phi_\la(x) = \int_{S^{d-1}} e^{i\la x.\om}d\si(\om)  $$

	where $d\si$ is the normalized surface measure on the unit sphere $S^{d-1}$ and $\la \in \C$ and the second one being defined as  $ \ds E_\zeta (x)= e^{i\zeta \cdot x}$ for $\zeta=(\zeta_1, \ldots, \zeta_d) \in \C^d$. One can verify that for $\la \in \C$ and $\zeta \in \C^d$  both these family of functions are eigenfunctions of the Laplacian having exponential growth i.e., they satisfy,
$$|\phi_\la(x)|\leq e^{|\im(\la)||x|}~~\text{and}~~|E_\zeta(x)|\leq e^{|\im(\zeta)||x|}.$$
But they don't satisfy the condition \eqref{eq:pg} as mentioned in Theorem \ref{th:howard2} if $\im(\la)\ne 0$ and $\im(\zeta)=(\im(\zeta_1), \ldots, \im(\zeta_d))\ne 0$.
	
	Precisely, our aim is to ascertain if a doubly infinite sequence of complex-valued functions $\left\{f_k \right\}_{k\in \Z}$ on $\R^d$ satisfies the  conditions $ \Delta f_k= \ap f_{k+1} $ and 
	\begin{equation}\label{eq:eg} | f_k(x) | \leq M e^{a|x|}, \end{equation}for all $k \in \Z$ and $x\in \R^d$, then $f_0$ is an eigenfunction of $\D$. 
	
	As a prelude before verifying the above objective, we must necessarily have an understanding regarding the point spectrum of the Laplacian in the function space $X_a$, where 
$$X_a = \{f : \R^d \to \C |~ |f(x)| \leq Me^{a|x|}\}.$$ for a fixed $a>0$.
It is into this context we use a crucial observation that $\phi_\la \in X_a $ for $|\im(\la)|\leq a$ and for any function $f \in X_a$ such that $ \D f= \la^2 f$, then $|\im(\la)| \leq a$ (See Proposition \ref{proposition 2.1}). Hence we shall see that the point spectrum of Laplacian on $X_a$ is precisely $\Lambda(\Omega_{a})$, where $\Lambda$ denotes the map $\la \mapsto \la^2$ and $\Omega_{a}$ denotes the complex strip $$\{\la \in \C |   |\im(\la)| \leq a\}.$$
 Further, routine calculations shows that the boundary $\partial \Lambda(\Omega_{a})$ of $\Lambda(\Omega_{a})$ is a parabola given by $\{\la=s+it|t^2 = 4a^2(s+a^2)\}$, see Figure 1. Keeping in mind the geometry of the point spectrum of Laplacian, we shall aim to characterize eigenfunctions of Laplacian having exponential growth corresponding to any arbitrary complex eigenvalue.
	
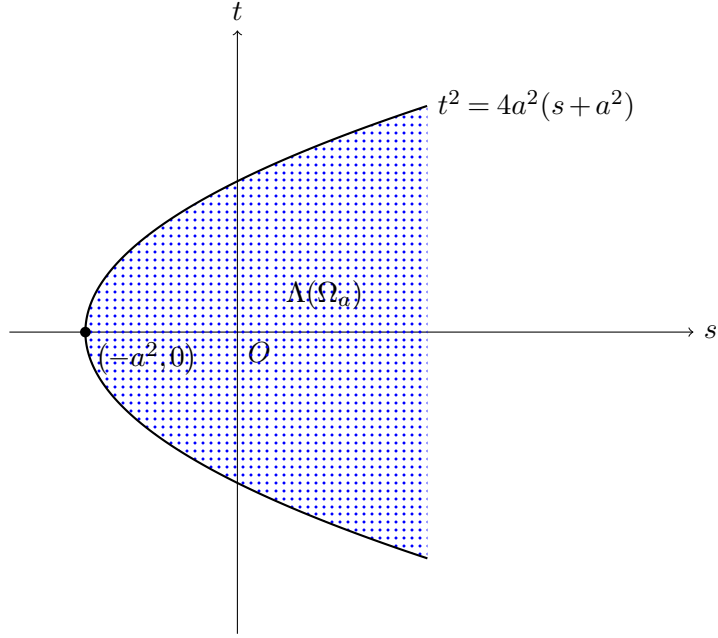
\begin{figure}[h!]
    \centering
	\begin{tikzpicture} \label{Diagram 1}
				\draw[->] (-3,0) -- (6,0) node[right] {$s$};
				\draw[->] (0,-4) -- (0,4) node[above] {$t$};
				
				\node[below right] at (0,0) {$O$};
                                          \fill[pattern=dots, pattern color=blue]
    plot[domain=-3:3] ({\x*\x/2-2}, \x) -- 
    plot[domain=-3:3] (2.5, \x);
\node at (0.5, 0.5) [right]{$\Lambda(\Omega_a)$};
				
				\draw[thick,domain=-3:3,smooth,samples=100]
				plot ({\x*\x/2-2}, \x)
				node[right] {$t^2=4a^2(s+a^2)$};
				\fill[black] (-2,0) circle(2pt);
				\node at (-2, 0) [below right] {$\left(-a^2, 0 \right)$};				
	\end{tikzpicture}
    \caption{Point spectrum of $\D$ on $X_a$.}
\end{figure}
 For any $\lambda_0\in \C$ with $\im (\la_0)\ne 0$ and $a_{\la_0}=|\im(\la_0)|$, it is easy to see that $\la_0^2 \in \partial \Lambda(\Omega_{a_{\la_{0}}})$. 
We formulated a characterization for the eigenfunctions of Laplacian having exponential growth corresponding to the complex eigenvalue $\la_{0}^2$ as follows.  
\begin{theorem}\label{Theorem1.5}
	For any $\la_0 \in \C$ with $\im(\la_0) \neq 0$,  let $N(\la_0)$ denotes the outward normal drawn to the parabola $\partial \Lambda(\Omega_{|\im(\la_0)|})$ at the point $\la_{0}^2$.  Let $z_0 \in N(\la_0)$ and $\{f_k\}_{k \in \mathbb{Z}}$ be a doubly infinite sequence of functions on $\R^d$ satisfying 
	\begin{enumerate}
		\item $(\D- z_0 I)f_k = Af_{k+1}$ for some non-zero $A \in \C $ and
		\item  $ |f_k(x)| \leq M e^{|\im(\la_0)| |x|} $ for a constant $M>0$ and for every $x \in \R^d$. 
	\end{enumerate} Then the following assertions hold.
	\begin{enumerate}[(a)]
		\item If $|A| = |{\la_0}^2 - z_0|$, then $\D f_0 = {\la_0}^2 f_0$.
		\item If $|A| < |{\la_0}^2 - z_0|$, then $f_k = 0,$ for all $k \in \Z$.
		\item There are solutions satisfying conditions (1) and (2) which are not eigenfunctions of $\D$ when $|A| >  |{\la_0}^2 - z_0|$.
	\end{enumerate}
\end{theorem}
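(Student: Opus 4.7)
Set $a := |\im(\la_0)|$. Forward iteration of hypothesis (1) gives $(\D - z_0 I)^n f_0 = A^n f_n$ for every $n \geq 0$, so (2) yields
\[
|(\D - z_0 I)^n f_0(x)| \leq M |A|^n e^{a|x|}, \qquad n \geq 0.
\]
In the heuristic language of spectral theory, this bound confines the $\D$-spectrum of $f_0$ to the closed disk $\overline{D(z_0,|A|)}$, while (2) alone confines it to $\Lambda(\Omega_a)$ via Proposition \ref{proposition 2.1}. The region $\Lambda(\Omega_a)$ is convex with smooth strictly convex boundary, and the hypothesis $z_0 \in N(\la_0)$ forces $\la_0^2$ to be the unique point of $\Lambda(\Omega_a)$ closest to $z_0$. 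Consequently $\overline{D(z_0,|A|)} \cap \Lambda(\Omega_a)$ is the singleton $\{\la_0^2\}$ in case (a), empty in case (b), and has nonempty interior in case (c), and this will drive the three conclusions.

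\textbf{Reduction to a radial one-dimensional problem.} Fix $x \in \R^d$ and consider the spherical means
\[
F_k^x(R) := \int_{\sn} f_k(x+R\om)\, d\si(\om), \qquad R \geq 0.
\]
Since the mean intertwines with $\D$ and, as a function of $R$, is governed by the radial Laplacian $L_R u := -u''(R) - \frac{d-1}{R} u'(R)$, the sequence satisfies $(L_R - z_0) F_k^x = A F_{k+1}^x$ together with $|F_k^x(R)| \leq M e^{a|x|}\,e^{aR}$. Writing $j_\la(R) := \phi_\la(Re_1)$ for the unique even smooth solution on $[0,\infty)$ of $L_R j_\la = \la^2 j_\la$ with $j_\la(0) = 1$, the aim in case (a) is to prove $F_0^x(R) = j_{\la_0}(R) f_0(x)$ for all $x, R$. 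Once this identity is established, applying $L_R$ to both sides, using the intertwining $L_R \circ M_R = M_R \circ \D$, and specializing to $R = 0$ recovers $\D f_0(x) = \la_0^2 f_0(x)$.

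\textbf{Spectral analysis and the three cases.} The radial recursion iterates to $|(L_R - z_0)^n F_0^x(R)| \leq M e^{a|x|} |A|^n e^{aR}$ for every $n \geq 0$. Mollifying $F_0^x$ in $R$ against a compactly supported even bump $\psi_\eps$ and invoking a Paley-Wiener theorem for the Hankel transform of Bessel type, the exponential-growth bound produces a holomorphic extension of the transform to the strip $|\im\la| \leq a$, and the iterated bound (in the limit $\eps \downarrow 0$) confines its support to $\{|\la^2 - z_0| \leq |A|\}$. Thus the effective spectrum of $F_0^x$ sits inside $\overline{D(z_0,|A|)} \cap \Lambda(\Omega_a)$. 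In case (a) this is $\{\la_0^2\}$, forcing $F_0^x(R) = C(x) j_{\la_0}(R)$ with $C(x) = F_0^x(0) = f_0(x)$, which closes the argument. In case (b) the intersection is empty, so $F_0^x \equiv 0$ and $f_0 \equiv 0$; the recursion then propagates the vanishing to $k > 0$, while for $k < 0$ one notes that $z_0 \notin \Lambda(\Omega_a)$ forces, via Proposition \ref{proposition 2.1}, any $\D$-eigenfunction in $X_a$ with eigenvalue $z_0$ to vanish, so inductively $f_{-k} = 0$. For case (c), since $\partial D(z_0,|A|) \cap \Lambda(\Omega_a)$ is a nontrivial arc, one selects distinct $\mu_1, \mu_2 \in \Omega_a$ with $\mu_i^2$ on this arc, sets $\alpha_i := (\mu_i^2 - z_0)/A$ of modulus $1$, and verifies that $f_k(x) := c_1 \alpha_1^k \phi_{\mu_1}(x) + c_2 \alpha_2^k \phi_{\mu_2}(x)$ satisfies both (1) and (2) uniformly in $k \in \Z$ but is not an eigenfunction of $\D$.

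\textbf{Main obstacle.} The technical crux is the rigorous extraction of spectral support from the iterated exponential-growth bound, since neither the classical Fourier nor the Hankel transform is a priori defined on $F_0^x$ in the absence of decay. The mollification--Paley-Wiener route above requires careful uniformity in $x$, and the saturated equality case (a), in which the disk is tangent to $\Lambda(\Omega_a)$ at $\la_0^2$, demands an approximation sharp enough to preserve the extremal tangency geometry. An attractive alternative is a self-contained \emph{subtract and improve} scheme: show directly that $g^x(R) := F_0^x(R) - j_{\la_0}(R) f_0(x)$ satisfies the iterated $L_R - z_0$ bound with a strictly smaller effective radius, thereby reducing the tangent case (a) to the already-handled disjoint case (b). Strict convexity of $\partial\Lambda(\Omega_a)$ at $\la_0^2$ is essential in either approach for ruling out spread-out spectral contributions in the extremal regime.
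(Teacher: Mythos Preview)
Your overall strategy---reduce to radial spherical means, push the recursion through a Hankel/spherical Fourier transform, and read off the conclusion from the geometry of $\overline{D(z_0,|A|)}\cap\Lambda(\Omega_a)$---is exactly the shape of the paper's argument. The paper packages the transform step by building the Fr\'echet pair $(S^a_{\mathrm{rad}}(\R^d),H_e(\Omega_a))$ and extending $\mathcal H$ to the duals, which cleanly handles the ``no decay'' issue you flag as the main obstacle; your mollification/Paley--Wiener route would have to reproduce that machinery. The construction in case~(c) and the inductive vanishing in case~(b) match the paper's.

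There is, however, a genuine gap in your treatment of case~(a). From the tangency $\overline{D(z_0,|A|)}\cap\Lambda(\Omega_a)=\{\la_0^2\}$ you conclude that the transform of $F_0^x$ is supported at $\la_0$, and then jump to $F_0^x(R)=C(x)\,j_{\la_0}(R)$. Point support only yields that $F_0^x$ is a \emph{generalized} eigenfunction, i.e.\ a finite combination of $\partial_\la^j j_\la(R)\big|_{\la=\la_0}$; equivalently, your argument so far gives $(\la^2-\la_0^2)^{N+1}\widehat{F_0^x}=0$ for some $N\ge 0$ but not $N=0$. Strict convexity of $\partial\Lambda(\Omega_a)$ does not by itself exclude higher-order contact of the spectral measure. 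In the paper this is precisely the content of its ``Step~2'': one observes that the forward images $S_j=\mathcal H T_j$ span a finite-dimensional space, picks the element $S=(\la^2-\la_0^2)^{k_0-1}S_0$ of highest surviving order, and uses the uniform bound on \emph{all} $T_k$ (not just $T_0$) together with the binomial expansion of $(\la^2-z_0)^k$ around $\la_0^2-z_0$ to force $(\la^2-\la_0^2)S=0$, contradicting maximality of $k_0$ unless $k_0=0$. Your proposal never invokes the uniform-in-$k$ bound at this stage, and neither the direct Paley--Wiener argument nor the sketched ``subtract and improve'' scheme supplies a mechanism to kill the higher-order jets. You need an analogue of that second step to close case~(a).
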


%
	
	Further, as an extension of the Roe's theorem \ref{th:roe} and Howard's theorem \ref{th:howard1}, we shall prove the following result, which is a Roe's type characterization of exponential function. 
	\begin{theorem}\label{Theorem 1.2}
		\par 	For any $\la_0 \in \C$ with $\im(\la_0) \neq 0$, let $\{f_k\}_{k \in \mathbb{Z}}$ be a doubly infinite sequence of functions on $\R$ satisfying 
		\begin{enumerate}
			\item For some non-zero $A \in \C $, we have $\begin{cases}
				(\frac{d}{dx} - i\la_0 + \ap)f_k = Af_{k+1}, &\mbox {if}~ \im(\la_0)>0\\ (\frac{d}{dx} - i\la_0 - \ap)f_k = Af_{k+1}, &\mbox {if}~ \im(\la_0)<0,
			\end{cases}$
			\item  $ |f_k(x)| \leq M e^{|\im(\la_0)| |x|} $ for a constant $M>0$ and for every $x \in \R$. 
		\end{enumerate} Then the following assertions hold.
		\begin{enumerate}[(a)]
			\item If $|A| = |\ap|$, then $\frac{d}{dx} f_0 = i{\la_0} f_0$. i.e. $f_0(x)= e^{i\la_0 x}$.
			\item If $|A| < |\ap|$, then $f_k = 0,$ for all $k \in \Z$.
			\item There are solutions satisfying conditions (1) and (2) which are not eigenfunctions of $\frac{d}{dx}$ when $|A| >  |\ap|$.
		\end{enumerate}
	\end{theorem}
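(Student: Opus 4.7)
The first move is to linearise via the substitution $u_k(x):=e^{-i\la_0 x}f_k(x)$; I treat the case $\im(\la_0)>0$, the opposite sign being symmetric under complex conjugation. A direct computation converts hypothesis (1) into the first-order recurrence $u_k'+\ap u_k = A u_{k+1}$ and hypothesis (2) into the piecewise bound $|u_k(x)|\le M$ on $x\le 0$ and $|u_k(x)|\le Me^{2\im(\la_0) x}$ on $x\ge 0$, uniformly in $k$. Using the conjugation identity $(d/dx+\ap)=e^{-\ap x}\,(d/dx)\,e^{\ap x}$ and iterating the recurrence then yields
\[ u_n \;=\; (\ap/A)^n \bigl(1 + \ap^{-1}\,d/dx\bigr)^n u_0, \qquad n\ge 0. \]
Equivalently $v_0(x):=e^{\ap x}u_0(x)$ satisfies $v_0^{(n)}(x)=A^n e^{\ap x}u_n(x)$, so the Taylor bounds $|v_0^{(n)}(x_0)|\le |A|^n M e^{\ap x_0}$ for $x_0\le 0$ extend $v_0$ (hence $u_0$) to an entire function of exponential type at most $|A|$.

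The heart of the argument is the generating-function identity
\[ W(x,t):=\sum_{n=0}^{\infty}\bigl(1+\ap^{-1}d/dx\bigr)^n u_0(x)\cdot\frac{t^n}{n!}\;=\; e^{t}\,u_0(x+t/\ap), \]
obtained by recognising $\exp(tD/\ap)$ as the shift operator on the entire function $u_0$. Transferring the uniform bound $|u_n(x)|\le M$ for $x\le 0$ gives $|(1+\ap^{-1}d/dx)^n u_0(x)|\le (|A|/\ap)^n M$, and summing the majorising series yields $|W(x,t)|\le Me^{|A||t|/\ap}$ for all $x\le 0$ and $t\in\C$. Writing $z:=x+t/\ap$ (equivalently $t=\ap(z-x)$), this recasts as the a priori bound
\[ |u_0(z)|\;\le\; M\exp\bigl(|A|\,|z-x|-\ap\,\re(z-x)\bigr),\qquad z\in\C,\ x\le 0. \]

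The conclusion is obtained by sending $x\to-\infty$ with $z$ held fixed. An elementary expansion gives $|z-x|-\re(z-x)\to 0$ while $\re(z-x)=|x|+\re(z)\to+\infty$, so the exponent is asymptotic to $(|A|-\ap)|x|$. In case (a), $|A|=\ap$, the exponent tends to $0$, so $|u_0(z)|\le M$ throughout $\C$; by Liouville $u_0\equiv c$ is constant, which on reversing the substitution gives $f_0(x)=c\,e^{i\la_0 x}$. In case (b), $|A|<\ap$, the exponent tends to $-\infty$, forcing $u_0\equiv 0$ and hence $f_0\equiv 0$; the first-order ODE $(d/dx-i\la_0+\ap)f_{k-1}=Af_k$ has a unique solution within the exponential-growth class (its homogeneous solution $e^{(i\la_0-\ap)x}$ violates the growth bound), so the vanishing propagates to $f_k\equiv 0$ for every $k\in\Z$.

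For the sharpness statement (c), $|A|>\ap$, the exponentials $f_k(x):=\bigl((\mu-i\la_0+\ap)/A\bigr)^k e^{\mu x}$ will serve as counterexamples whenever $\mu:=i\la_0-\ap+|A|e^{i\ta}$ for a non-zero $\ta$ with $\cos\ta\ge\ap/|A|$; such an arc of $\ta$'s exists precisely because $|A|>\ap$. The construction ensures $|\mu-i\la_0+\ap|=|A|$ (so $|f_k|$ is independent of $k$) and $|\re(\mu)|\le\im(\la_0)$ (enforcing the growth bound), while $\mu\ne i\la_0$ prevents $f_0$ from being an eigenfunction of $d/dx$. The principal technical hurdle is the middle paragraph: establishing entireness of $u_0$ and rigorously identifying $W(x,t)$ with $e^t u_0(x+t/\ap)$; once this is in place, the Liouville and vanishing conclusions follow transparently.
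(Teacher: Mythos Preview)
Your arguments for (a) and (b) are correct and take a genuinely different route from the paper. The paper deduces Theorem~1.2 from a distributional result (Theorem~4.1) proved by Fourier methods: one shows $(i\la-i\la_0)^{N+1}\Hc T_0=0$ via seminorm estimates on $H_e(\Omega_a)$, then reduces to $N=0$. Your approach is entirely elementary: the substitution $u_k=e^{-i\la_0 x}f_k$, the observation that $v_0=e^{\ap x}u_0$ has $v_0^{(n)}=A^n e^{\ap x}u_n$, and the generating-function identity $W(x,t)=e^{-\ap x}v_0(x+t/\ap)=e^t u_0(x+t/\ap)$ together with the limit $x\to-\infty$ reduce everything to Liouville. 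This avoids the Schwartz-type space machinery altogether and gives a direct one-variable proof; the paper's method, by contrast, is designed to port to $\R^d$ and to the Laplacian, where no such first-order factorisation is available.

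Part (c), however, has a genuine gap. A single exponential $f_0(x)=e^{\mu x}$ is \emph{always} an eigenfunction of $d/dx$ (with eigenvalue $\mu$), so the condition $\mu\neq i\la_0$ does not prevent $f_0$ from being an eigenfunction---it only prevents the eigenvalue from being $i\la_0$. The statement asks for solutions that are not eigenfunctions at all. The fix is immediate with the ingredients you already have: pick two distinct angles $\ta_1\neq\ta_2$ on your arc $\cos\ta\ge\ap/|A|$, set $\mu_j=i\la_0-\ap+|A|e^{i\ta_j}$, and take
\[
f_k(x)\;=\;e^{ik\ta_1}e^{\mu_1 x}\;+\;e^{ik\ta_2}e^{\mu_2 x}.
\]
Each summand satisfies conditions (1) and (2) with the same uniform bound, hence so does the sum; and since $\mu_1\neq\mu_2$, $f_0=e^{\mu_1 x}+e^{\mu_2 x}$ is not an eigenfunction of $d/dx$. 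This is exactly the superposition trick the paper uses in its proof of Theorem~2.2(c).
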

	
	Having stated our aim, our strategy is to formulate a general version of the above results, so that these results happens to be  consequential. The organisation of this paper is as follows. We shall devote the entire Section \ref{section2} for setting up the necessary framework and establishing our main results. In particular, we shall define a  \textit{Schwartz type space}  $S^a(\R^d)$ and its corresponding dual space named as the space of all \textit{exponential tempered distributions of type $a$}, adapting the definition from the classical case. It is already a known fact that the spherical Fourier transform as defined in Subsection \ref{subsection2.2} is a topological isomorphism from the space of all radial functions in $S^a(\R^d)$ to the space of all even holomorphic functions which are rapidly decaying at infinity on the strip $\Omega_{a}$. Further, this isomorphism can be extended to the corresponding dual spaces as well and using this, we shall prove a version of the Strichartz's theorem for characterizing the exponential type tempered eigendistributions of the Laplacian. Then the proof of  Theorem \ref{Theorem1.5} follows by establishing the fact that functions having exponential growth are indeed exponential tempered distributions of type $a$. Our proof for the main theorem closely follows the methodologies adopted by the authors M. Naik and R. Sarkar \cite{muna} for characterizing eigenfunctions of the Laplace-Beltrami operator on Riemannian symmetric spaces of non-compact type with real rank one. Finally, in Section \ref{section3} we shall conclude this article by establishing a Roe's type characterization for the exponential type tempered eigendistributions of the ordinary derivative $\frac{d}{dx}$ as in Theorem \ref{Theorem 4.1}, from which Theorem \ref{Theorem 1.2} follows. 
	
	\section{Characterization of the exponential type tempered eigendistributions of the Laplacian}\label{section2}
	\subsection{Setting the framework}\label{subsection2.1}
	\subsubsection{Schwartz type space $S^a(\R^d)$ and $H_e(\Omega_{a})$}:

	We shall introduce a \textit{Schwartz type space} as follows. For a fixed real number $a \geq 0$, let $S^a(\R^d)$ be the space of all $C^{\infty}$ functions on $\R^d$ such that 
	
	$$ \gm _{m,\ap}(f) = \sup_{x \in \R^d} e^{a|x|} (1+|x|)^m |D^\ap f(x)| < \infty$$
	for every non-negative integer $m$ and for all multi-index $\ap=(\ap_1, \ldots, \ap_d)$, where $$\displaystyle D^\ap f = \frac{\partial^{|\ap|}f}{{\partial x_1}^{\ap_1}\cdots \partial {x_d}^{\ap_d}}.$$ We note that for $ a=0$, above space coincides with the standard Schwartz space $\mathcal{S}(\R^d)$. Let $\sar(\R^d)$ denote the space of all radial functions in $S^a(\R^d)$. We now define the strip $\Omega_{a} = \{\la \in \C |  |\im(\la)| < a\}$. Let $H_e(\Omega_{a})$ be the set of all even holomorphic functions on $\Omega_{a}$ which are continuous on the closure of $\Omega_{a}$ and satisfying for all non-negative integers $m$ and for all multi-indices $\ap$ 
	$$\mu_{m,\ap}(\phi) = \sup_{\la \in \Omega_{a}} (1+|\la|)^m \left|\left(\frac{d}{d\la}\right)^\ap \phi (\la)\right|< \infty. $$
	
	It can be seen that both spaces $S^a(\R^d)$ and $H_e(\Omega_{a})$ are Fr\'echet spaces with respect to the topology induced by the family of seminorms $\{\gm _{m,\ap}\}$ and $\{\mu_{m,\ap}\}$ respectively. Now for any function $f\in \sar(\R^d)$ we define the spherical Fourier transform of $f$ as
	$$ \mathcal{H}f(\la)= \int_{\R^d} f(x) \phi_{\la}(x)dx. $$
It is known that, $\mathcal{H}: \sar(\R^d)\to H_e(\Omega_{a})$ is a topological isomorphism, see \cite[Theorem 2.1]{natan}. Using classical Fourier inversion formula, we can show that 
$$\mathcal{H}^{-1}\phi(x)=\int_{0}^\infty \phi(\la)\phi_\la(x) \la^{d-1}d\la.$$

	\subsubsection{Extension of spherical Fourier transform to corresponding dual spaces $S^a (\R^d)'$ and $H_e(\Omega_{a})'$}\label{subsection2.1.2} 

	In what follows $H_e(\Omega_{a})'$ denotes the dual of $H_e(\Omega_{a})$ and for  any $S \in H_e(\Omega_{a})'$, for any suitable even holomorphic function $\psi$ on $\Omega_a$,  we define $ \psi S$ as a dual element by the equation $$\langle \psi S, \phi \rangle = \langle S,  \psi \phi \rangle, $$ for all $\phi \in H_e(\Omega_{a})$,  where $\langle . , . \rangle$  denotes the dual bracket.

	Now we name $S^a(\R^d)'$, the dual space of $S^a(\R^d)$ as the space of  all exponential tempered distributions of type $a$ on $\R^d$ and for any $\psi \in S^a(\R^d)$, we define the action of the operator $\D$ on $T \in  S^a(\R^d)'$ as follows:  $\langle \D T, \psi \rangle = \langle T,  \D \psi \rangle $.
	Let $T$ be an exponential tempered distribution of type $a$. The spherical Fourier transform of $T$, denoted by $\mathcal{H}T$ is defined as a linear functional on $H_{e}(\Omega_{a})$ by the following rule:
	$$ \langle \mathcal{H}T, \phi \rangle = \langle T,\mathcal{H}^{-1}\phi \rangle $$
	where $\phi \in H_{e}(\Omega_{a}), \mathcal{H}^{-1}\phi \in S^{a}(\R^{d})$ and $\mathcal{H}(\mathcal{H}^{-1}\phi)= \phi$.
	
	\subsubsection{Radialization operator}
	For a suitable function $f$ on $\R^d$, its radialization $Rf$ is defined as $$ Rf(x) = \int_{S^{d-1}} f(|x|\omega) d\sigma(\omega)$$
	 where $d\sigma$ denote the normalized surface measure on $S^{d-1}$. Let us have a look at some of the important properties of the radialization operator:
	 \begin{itemize}
	 	\item $$\int Rf(x)g(x) dx  = \int f(x) Rg(x) dx , $$where $f, g \in S^a(\R^d)$;
	 	\item $ R(\D f)= \D (Rf)$. 
	 \end{itemize}
	 The above definition of radialization can be extended to $T \in \sa(\R^d)' $ as follows.
	 \begin{definition}
	 	For any $T \in \sa(R^d)'$, the radialization of $T$ is defined by the rule $\langle RT, f \rangle = \langle T, Rf \rangle$ for all $f \in \sa(\R^d)$. Thus $T \in \sa(\R^d)'$ is said to be radial if it satisfies $RT = T$.
	 	
	 \end{definition}

	 \subsubsection{Translation operator}
	 Given any $y \in \R^d$ and a function $f: \R^d \to \C,$ we define the translation of $f$ as $(\ell_yf)(x)= f(x-y)$.  For $T \in S^a(\R^d)'$ and $y \in \R^d$ , we define the translation $\ell_yT$ via $$\langle \ell_yT, f \rangle= \langle T, \ell_{-y} f \rangle, $$ for every $f \in S^a(\R^d)$.
	 
	 \subsubsection{Point spectrum of Laplacian in $X_a$}
	 Let us have a look at the following proposition which is a crucial ingredient in the formulation of our main theorem \ref{Theorem 2.1}, as it provides an insight into the point spectrum of the Laplacian in the space $X_a$, where 
	 $$X_a = \{f : \R^d \to \C |~ |f(x)| \leq Me^{a|x|}\}.$$ 
	 for a fixed $a>0$.
	 \begin{proposition}\label{proposition 2.1}
	 	Let $f$ be a non-zero function in $X_a$ such that $\D f = \la^2 f$, then $ |\im(\la)| \leq a$.
	 \end{proposition}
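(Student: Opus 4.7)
The plan is to combine a mean value identity for eigenfunctions of $\Delta$ with the large-$r$ asymptotic of the Euclidean spherical function $\phi_\lambda$. Since $\Delta f = \lambda^2 f$ is an elliptic equation, standard regularity forces $f \in C^\infty(\R^d)$, so all manipulations below are classical; because $f \not\equiv 0$, fix some $x_0 \in \R^d$ with $f(x_0) \ne 0$.

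First I would establish the eigenfunction mean value identity. Consider the spherical mean
$$M_r f(x_0) := \int_{S^{d-1}} f(x_0 + r\omega)\,d\sigma(\omega),\qquad r \ge 0.$$
Differentiation under the integral sign together with $\Delta f = \lambda^2 f$ shows that $r\mapsto M_r f(x_0)$ is smooth at $r=0$ with value $f(x_0)$, and that it satisfies the radial part of $(\Delta - \lambda^2)v = 0$. Since the only radial solution of that ODE which is regular at the origin is (up to scaling) $\phi_\lambda$, and $\phi_\lambda(0) = 1$, one concludes that
$$M_r f(x_0) = f(x_0)\,\phi_\lambda(r), \qquad r \ge 0,$$
where by radiality $\phi_\lambda(r)$ denotes the common value of $\phi_\lambda$ on the sphere of radius $r$. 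Inserting the hypothesis $|f(x)| \le M e^{a|x|}$ into the left hand side and dividing by $|f(x_0)|$ then yields a constant $C > 0$ with
$$|\phi_\lambda(r)| \le C\, e^{a r} \qquad \text{for all } r \ge 0.$$

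It remains to observe that this bound forces $|\im(\lambda)| \le a$. Starting from
$$\phi_\lambda(r) = c_d \int_{-1}^{1} e^{i\lambda r t}\,(1-t^2)^{(d-3)/2}\,dt \qquad (d \ge 2),$$
(with the exceptional case $\phi_\lambda(r) = \cos(\lambda r)$ when $d = 1$), a standard Laplace / stationary-phase analysis near the endpoints $t = \pm 1$ produces the asymptotic
$$|\phi_\lambda(r)| \;\sim\; c\,\frac{e^{|\im(\lambda)|\, r}}{r^{(d-1)/2}}, \qquad r \to \infty.$$
If one had $|\im(\lambda)| > a$, this asymptotic would eventually dominate $C e^{a r}$, contradicting the bound just derived; hence $|\im(\lambda)| \le a$.

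The only nontrivial step is the clean derivation of the mean value identity, in particular justifying that one selects the \emph{regular} radial eigenfunction $\phi_\lambda$ rather than its singular partner; once that identity is in place, the growth-versus-asymptotic comparison is immediate.
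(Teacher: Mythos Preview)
Your argument is correct and structurally parallel to the paper's: both reduce to the radial ODE via spherical averaging, select the solution that is regular at $r=0$, and conclude from its large-$r$ growth. The paper carries this out by explicitly converting the radial equation to the modified Bessel equation and invoking the asymptotics of $I_\nu$; you instead identify the regular solution as $\phi_\lambda$ and read off the asymptotic from its integral representation---equivalent, since $\phi_\lambda(r)$ \emph{is} (up to normalization) that Bessel function. One genuine improvement in your version: you center the spherical mean at a point $x_0$ with $f(x_0)\ne 0$, so the averaged function is visibly nontrivial; the paper radializes about the origin and tacitly assumes $Rf\not\equiv 0$, which can fail for eigenfunctions of the form $Y_k(\omega)g(r)$ with $k\ge 1$.
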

	 
	 \begin{proof} Note that $f$ is an eigenfunction of $\D$ implies that its radialization $Rf$ is also an eigenfunction of $\D$, as $\D$ commutes with radialization $R$.  Let $Rf(x)=f_0(|x|)$ for some function $f_0$ on $[0, \infty)$, then $f_0$ satisfies 
\begin{equation}\label{e1}
	 		\frac{d^2f_0}{dr^2} + \frac{d-1}{r} \frac{df_0}{dr} + \la^2 f_0= 0.
	 	\end{equation}   On applying a suitable change of variable of the form $f_0(r)= (i\la r)^{-\frac{d}{2}+1} u(i\la r)$, \eqref{e1} can be converted to a Modified Bessel's equation of the form $$ r^2 u'' + r u'- \left[r^2 + \Big(\frac{d}{2}-1\Big)^2\right]u=0 $$ whose general solution is obtained as
	 	\begin{equation}\label{e4}
	 		u(r) =  A I_\nu( r) + B K_\nu( r)
	 	\end{equation}  where $ I_\nu$ and $ K_\nu$ are the modified Bessel functions and $\nu = \frac{d}{2}-1$ [Refer pages 223-226 of \cite{niki} for more details].  Now $f_0$ being bounded near $r=0$ implies the boundedness of $u$ at $r=0$ and thus $B=0$ in \eqref{e4}. Hence $$f_0(r) = A(i\la r)^{-\nu}  I_{\nu}(i\la r).$$Using the asymptotic behaviour of  $I_\nu(r)$ as $r\to \infty$, we obtain that $$ f_0(r)= A (i\la r)^{-\nu} \frac{e^{(i\la r)}}{\sqrt{2 \pi i \la r }} [1+ O(1/r)].$$ Since $|f_0(r)| \leq M e^{ar}$ for $r>0$, we must have that $|\im (\la)|\leq a$.

%
%
	 	 \end{proof}
In view of the above proposition, it can be concluded that the point spectrum of Laplacian on $X_a$ is precisely $\Lambda(\Omega_{a})$, where $\Lambda$ denotes the map $\la \mapsto \la^2$ and $\Omega_{a}$ denotes the complex strip $$\{\la \in \C |   |\im(\la)| \leq a\}.$$

	\subsection{Characterization of the exponential type tempered eigendistributions of the Laplacian}\label{subsection2.2}
	
	Under the framework defined in Subsection \ref{subsection2.1}, we shall now formulate our major result, which is a characterization of the exponential-type tempered eigendistributions of the Laplacian corresponding to any arbitrary complex eigenvalue.
	
Recall that for any $\la \in \C$, $N(\la)$ is the set of all points on the outward normal drawn to parabola $\partial \Lambda(\Omega_{a_{\la}})$ at the point $\la^2$ with $a_\la=|\im(\la)|$. With these notations, we shall state the theorem as follows.
	\begin{theorem}\label{Theorem 2.1}
		For any $\la_0 \in \C$ with $a=|\im(\la_0)| \neq 0$ and $z_0 \in N(\la_0),$ let $\{T_k\}_{k \in \mathbb{Z}}$ be a doubly infinite sequence of exponential tempered distributions of type $a$ satisfying 
		\begin{enumerate}
			\item $(\D- z_0 I)T_k = AT_{k+1}$ for some non-zero $A \in \C $ and
			\item  There exist a seminorm $\gm$ on    $S^a(\R^d)$ and a constant $M>0$ such that $|\langle T_k, f \rangle | \leq M \gm(f)$ for all $f \in S^a(\R^d)$ and for all $k \in \Z$. 
		\end{enumerate} Then the following assertions hold.
		\begin{enumerate}[(a)]
			\item If $|A| = |\la_0 ^2 - z_0|$, then $\D T_0 = \la_0 ^2 T_0$.
			\item If $|A| < |\la_0 ^2 - z_0|$, then $T_k = 0,$ for all $k \in \Z$.
			\item There are solutions satisfying conditions (1) and (2) which are not eigendistributions of $\D$ when $|A| >  |\la_0 ^2 - z_0|$.
		\end{enumerate}
	\end{theorem}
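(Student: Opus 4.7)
The plan is to pass to the spherical-Fourier side, where $\D$ becomes multiplication by $\la^2$, and to exploit the fact that the choice of $z_0$ on the outward normal to $\partial\Lambda(\Omega_a)$ at $\la_0^2$ forces the rational function $g(\la):=(\la^2-z_0)/A$ to satisfy $|g(\la)|\ge 1$ on $\Omega_a$, with equality only at $\la=\pm\la_0$ in case (a) and strict inequality everywhere in case (b). Since $\mathcal{H}$ is most naturally defined on the radial part, the first step is a reduction to the radial case. For every $y\in\R^d$, the sequence $\{R\ell_y T_k\}_{k\in\Z}$ consists of radial exponential-type tempered distributions of type $a$, still satisfies $(\D-z_0 I)R\ell_y T_k=A\,R\ell_y T_{k+1}$ (because $\D$ commutes with both $R$ and $\ell_y$), and still obeys a uniform seminorm estimate (because $R$ and $\ell_y$ are continuous on $S^a(\R^d)$).

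For such a radial sequence $\{S_k\}$, the isomorphism $\mathcal{H}:\sar(\R^d)'\to H_e(\Omega_a)'$ converts the recursion into $\mathcal{H}S_k=g^k\mathcal{H}S_0$, together with a uniform dual bound $|\langle g^k\mathcal{H}S_0,\phi\rangle|\le M'\mu(\phi)$ valid for every $k\in\Z$ and every $\phi\in H_e(\Omega_a)$. Since $\Lambda(\Omega_a)$ is the closed convex region enclosed by the boundary parabola and $z_0$ sits along the outward normal at $\la_0^2$, a convex-geometry computation gives the claimed lower bound on $|g|$. In case (b), the uniform inequality $|g|\ge 1+\eta>1$ on $\Omega_a$ together with the boundedness of $\{g^k\mathcal{H}S_0\}$ as $k\to\infty$ forces $\mathcal{H}S_0=0$; applying the same argument to the bounded holomorphic function $g^{-1}$ (well-defined on $\Omega_a$ since $z_0\notin\Lambda(\Omega_a)$) as $k\to-\infty$ kills every $\mathcal{H}S_k$. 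In case (a), the uniform bound first forces $\mathcal{H}S_0$ to be supported on $\{\pm\la_0\}$, hence of the form $\sum_{n\le N}(c_n^+\partial^n\delta_{\la_0}+c_n^-\partial^n\delta_{-\la_0})$, and a Leibniz computation (using $g'(\la_0)=2\la_0/A\ne 0$) shows that $|\partial^n g^k(\pm\la_0)|$ grows as $k^n$, so the same uniform bound annihilates all $c_n^\pm$ with $n\ge 1$; evenness of $H_e(\Omega_a)$ then collapses $\mathcal{H}S_0$ to a single multiple $c\,\delta_{\la_0}$, i.e.\ $S_0=c\,\phi_{\la_0}$ and $\D S_0=\la_0^2 S_0$.

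Applying the radial conclusion to $S_k=R\ell_y T_k$ for every $y\in\R^d$ gives $\D R\ell_y T_0=\la_0^2 R\ell_y T_0$, i.e.\ $R\ell_y T_0=c(y)\phi_{\la_0}$ in $\sar(\R^d)'$ for each $y$. Unpacking the definitions of $R$ and $\ell_y$ rewrites this identity as the distributional spherical mean-value property for $T_0$ with respect to the eigenvalue $\la_0^2$, from which the Pizzetti-type converse to the mean-value theorem (implemented distributionally on $S^a(\R^d)'$) produces $\D T_0=\la_0^2 T_0$. For case (c), I would construct explicit counterexamples: when $|A|>|\la_0^2-z_0|$, the curve $\{|g|=1\}\cap\Omega_a$ strictly contains $\{\pm\la_0\}$, and choosing $\la_1',\la_2'$ on it with $\la_1'^2\ne\la_2'^2$ gives $T_k:=g(\la_1')^k\phi_{\la_1'}+g(\la_2')^k\phi_{\la_2'}$ satisfying (1) and (2) but with $T_0=\phi_{\la_1'}+\phi_{\la_2'}$ not an eigenfunction of $\D$. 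The main obstacle I anticipate is the case (a) analysis of $\mathcal{H}S_0$: justifying that it is concentrated at $\{\pm\la_0\}$ and then eliminating all derivative-delta components requires a careful quantitative use of the strong-dual bound together with the delicate geometry of $|g|$ near the boundary points $\pm\la_0$ of the strip $\Omega_a$.
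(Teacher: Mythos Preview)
Your plan follows essentially the same architecture as the paper: reduce to the radial case via $R\ell_y$, pass to $H_e(\Omega_a)'$ via $\mathcal{H}$, and exploit $|g|\ge 1$ on $\Omega_a$ coming from the normal-line geometry. Parts (b) and (c) coincide with the paper's arguments almost verbatim.

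For part (a) there are two differences worth flagging. First, the sentence ``the uniform bound first forces $\mathcal{H}S_0$ to be supported on $\{\pm\la_0\}$'' hides the main technical work. In the paper this is done by showing directly that $(\la^2-\la_0^2)^{N+1}\mathcal{H}S_0=0$ for $N=6\tau+1$ (where $\tau$ is the derivative order in the controlling seminorm $\mu$), via a careful estimate of $\mu\big[(A/(\la^2-z_0))^k(\la^2-\la_0^2)^{N+1}\phi\big]$: one splits $\Omega_a$ into a shrinking box of side $k^{-1/4}$ around $\la_0$ and its complement, since $|g|=1$ exactly at $\la_0$ and the extra vanishing factor is needed to win there. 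You have correctly identified this as the main obstacle, but you should be aware that it is a genuine quantitative computation, not a soft support argument.

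Second, your elimination of the higher-order pieces differs from the paper's. You plan to write $\mathcal{H}S_0=\sum_{n\le N}c_n^\pm\partial^n\delta_{\pm\la_0}$ and kill $c_n$, $n\ge 1$, using the polynomial growth of $\partial^n g^k(\la_0)$ in $k$. This is valid, but you first need the structure lemma that in $H_e(\Omega_a)'$ the relation $(\la^2-\la_0^2)^{N+1}S=0$ forces $S$ to be a finite combination of evaluation-derivative functionals at $\la_0$; this holds by iterated division by $\la^2-\la_0^2$ inside $H_e(\Omega_a)$, but it is an extra lemma. The paper avoids it entirely: taking the largest $k_0$ with $(\la^2-\la_0^2)^{k_0}\mathcal{H}S_0\ne 0$, it sets $S=(\la^2-\la_0^2)^{k_0-1}\mathcal{H}S_0$, expands $(\la^2-z_0)^kS$ binomially (only two terms survive), and uses the uniform bound on $T_k$ to force $(\la^2-\la_0^2)S=0$, a contradiction. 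This purely algebraic route is shorter.

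Finally, for the return to non-radial $T_0$ the paper does not invoke a Pizzetti-type converse: it simply observes that $R\ell_y U=0$ for all $y$ implies $\langle\ell_y U,h_t\rangle=0$ (the heat kernel is radial and lies in $S^a$), i.e.\ $U*h_t\equiv 0$, whence $U=0$ by letting $t\to 0$. This is more elementary than what you propose.
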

	
	\begin{proof}
		Proof of (a).	First, we assume that $T_k$'s are radial. Let $|A|=  |\la_0^2 - z_0|$ and $\la_0 ^2$ be such that $a:=\im(\la_0) > 0$. Observe that $|\Lambda(\la)- z_0| \geq | \Lambda(\la_0) - z_0|$, for $\la \in \Omega_a$. We note that $\mathcal{H}(\D T)=\la^2 \mathcal{H}T$.  From the hypothesis, we obtain $(\D - z_0I)^k T_0 = A^k T_k$ and hence $(\la^2 - z_0)^{k} \Hc{T_0} = A^k \Hc{T_k}$. \\The proof shall comprise two steps. In step (1), we shall show that \begin{equation}\label{eq03}
			(\la^2 - \la_0^2)^{N+1} \Hc{T_0} = 0 	\end{equation}  for some $N \in \Z ^+$  or equivalently $ \langle (\la^2 - \la_0^2)^{N+1} \Hc{T_0}, \phi \rangle = 0 $ for every $\phi \in H_e(\Omega_a)$. Therefore consider for any $\phi \in H_e(\Omega_a)$,
		\begin{align*}
			|\langle(\la^2 - \la_0^2)^{N+1}\Hc{T_0}, \phi \rangle|  &= |\langle \Hc{T_k}, \left(\frac{A}{\la^2 -z_0} \right)^k(\la^2 - \la_0^2)^{N+1} \phi \rangle|\\ &= \left | \bigg \langle T_k, \Hc^{-1}\bigg({\left(\frac{A}{\la^2 -z_0}\right)^k (\la^2- \la_0^2)^{N+1}\phi \bigg)} \bigg \rangle \right|  \\
			&\leq M \gm \bigg[\Hc^{-1}\bigg({\left(\frac{A}{\la^2 -z_0}\right)^k (\la^2- \la_0^2)^{N+1}\phi \bigg)}\bigg] \\
			&\leq   M' \mu\bigg[{\left(\frac{A}{\la^2 -z_0}\right)^k (\la^2- \la_0^2)^{N+1}\phi}\bigg] 
		\end{align*} where the seminorm $\mu$ is given by $$\mu(\phi)= \sup_{\la \in \Omega_{a}^+} \left| \frac{d^\tau}{d\la^\tau} P(\la)\phi(\la)\right|$$ for some even polynomial $P(\la)$ and derivative of even order $\tau$, where $\Omega_a^+=\{\la\in \Omega_a|\im(\la)\geq 0\}.$ Let $N = 6\tau + 1$ be fixed. Henceforth, we shall use the following notations. Let $F^k$ denotes the term:  $$\left| \frac{d^\tau}{d\la^\tau} P(\la)\bigg(\frac{A}{\la^2-z_0}\bigg)^k (\la^2-\la_0^2)^{N+1}\phi(\la)\right|$$
		We intend to show that $\sup_{\la\in\Omega_{a}^+} F^k \to 0,$ as $ k \to \infty$. 
		Note that 
\begin{multline*} \left| \frac{d^\tau}{d\la^\tau} P(\la)\bigg(\frac{A}{\la^2-z_0}\bigg)^k (\la^2-\la_0^2)^{N+1}\phi(\la)\right| = \sum_{\substack{l+m+n= \tau \\ l,m,n \in \Z^+}} C_{lmn} \frac{d^l}{d\la^l} \bigg(\frac{A}{\la^2-z_0}\bigg)^k \\ \times \frac{d^m}{d\la^m} (\la^2-\la_0^2)^{N+1} \frac{d^n}{d\la^n} (P(\la)\phi).\end{multline*}

		Using the facts that $ \phi \in H_e(\Omega_{a})$ and $ \bigg|\frac{A}{\la^2 - z_0} \bigg| = \bigg | \frac{\la_0^2- z_0}{\la^2-z_0}\bigg| \leq 1 $, for $ \la \in \Omega_{a}^+$, we can have the following two inequalities.
		\begin{equation}\label{eq04}
			F^k(\la)  \leq C_1 k^\tau \bigg | \frac{\la_0^2- z_0}{\la^2-z_0}\bigg|^k |\la^2- \la_0^2|^{N+1-\tau}	\end{equation}  and 	\begin{equation}\label{eq05}
			  F^k(\la)  \leq C_2 k^\tau \bigg | \frac{\la_0^2- z_0}{\la^2-z_0}\bigg|^k
		\end{equation}
	 where  $ C_1$ and $C_2$ being constants.
		Now we shall choose a compact connected neighborhood $\mathcal{U}$ of $\la_0$ in $\Omega_{a}^+$ in such a way that $\bigg |\frac{\la_0^2 - z_0}{\la^2-z_0}\bigg| < \frac{1}{2}$, for $\la \not\in  \mathcal{U}$. It follows from \eqref{eq05} that $F^k \to 0$ uniformly, as $ k \to \infty$ on $\Omega_{a}^+ \setminus \mathcal{U}$.
		It remains to show that
		 $$\sup_{\la \in \mathcal{U}} k^\tau \bigg | \frac{\la_0^2- z_0}{\la^2-z_0}\bigg|^k |\la^2- \la_0^2|^{5\tau+2} \to 0,$$ as $ k\to \infty$. On elucidating further, we shall see that \begin{equation}\label{eq06}
		 	 \sup_{\la \in \mathcal{U}} k^\tau \bigg | \frac{\la_0^2- z_0}{\la^2-z_0}\bigg|^k |\la^2- \la_0^2|^{5\tau+2} = \sup_{\la \in \mathcal{U}} k^\tau \bigg | \frac{\Lambda(\la_0)- z_0}{\Lambda(\la)-z_0}\bigg|^k |\Lambda(\la)-z_0 - (\Lambda (\la_0)-z_0)|^{5\tau+2}\end{equation}
		 	 From \eqref{eq06} and a detailed analysis of the Figure 2 implies that it is enough for us to prove that $$\sup_{z \in \Gamma } k^\tau \left|\frac{\beta}{z}\right|^k |\beta-z|^{5\tau+2} \to 0$$ where $\Gm$ is a compact region containing $\beta$, bounded by the parabolic arc and vertical line, lying on one side of tangent drawn at $\beta$ opposite to the origin. Now we may assume that $\beta$ lies on positive imaginary axis by applying a suitable rotation. We shall now show that $\ds\sup_{z \in H} k^\tau \left|\frac{\beta}{z}\right|^k |\beta-z|^{5\tau+2} \to 0$, as $k \to \infty$ where $H = \{z \in \C| -\eta \leq \re z \leq \eta, |\beta| \leq \im z\leq \delta\}$ for any $\eta > 0$ and $ \delta > |\beta|$. Let $V_k = \{z \in H | ~|\re(z-\bt)|< k^{\frac{-1}{4}}, | \im(z-\bt)| < k^{\frac{-1}{4}}\}$ and $ V_k^c = H \setminus V_k.$ Thus it can be verified that if $z \in V_k^c,$ then $ |z| \geq (|\bt|^2 + k^\frac{-1}{2})^{\frac{1}{2}}$. Hence for $ z \in V_k^c$, we have
		$$ \left|\frac{\bt}{z}\right| \leq \frac{|\bt|}{(|\bt|^2 + k^{\frac{-1}{2}})^{\frac{1}{2}}} = \left(1+ \frac{c_1}{\sqrt{k}}\right)^{\frac{-1}{2}}$$
		where $c_1 = |\beta|^{-2}$. Now using the compactness of $H$, we can see that for some constant $c_3,$
		\begin{equation}\label{eq07}
			\sup_{z \in V_k^c} k^\tau \left|\frac{\beta}{z}\right|^k |\beta-z|^{5\tau+2} \leq c_3 \left(1+ \frac{c_1}{\sqrt{k}}\right)^{\frac{-k}{2}} 	
			\end{equation} 
		
		If $z \in V_k$, then $|\re(z-\bt)| < k^{\frac{-1}{4}}$ and $|\im(z-\bt)|<k^{\frac{-1}{4}}$. Therefore, we obtain a constant $c_4$ such that
		\begin{equation}\label{eq08}
		 \sup_{z \in V_k } k^\tau \left|\frac{\beta}{z}\right|^k |\beta-z|^{5\tau+2} \leq c_4 k^\tau k^{\frac{-(5\tau + 2)}{4}}= c_4 k^{\frac{-(\tau+2)}{4}} 	\end{equation}
		Above inequalities \eqref{eq07} and \eqref{eq08} implies that $\sup_{z \in H} k^\tau |\frac{\beta}{z}|^k |\beta-z|^{5\tau+2} \to 0$ as $k \to \infty$. Hence we established that $F_k(\la) \to 0 $ uniformly, as $k \to \infty$.\\

\begin{figure}[h!] \label{Diagram 2}
\begin{center}
	\begin{tikzpicture}
				\draw[->] (-4,0) -- (5,0) node[right] {$s$};
				\draw[->] (0,-4) -- (0,4) node[above] {$t$};
				
				\node[below right] at (0,0) {$(0,0)$};
		\draw[thick,domain=-3:3,smooth,samples=100]
				plot ({\x*\x/2-3}, \x)
				node[right] {$\partial \Lambda(\Omega_a)$};
                       \draw[dotted] (-1.5,3) circle[radius=1.1180];
				\fill[black] (-3,0) circle(2pt);
				\fill[black] (-1,2) circle(2pt);
                                           \fill[black] (-1.5,3) circle(2pt);
                                           \node at (-1, 2) [below right]{$\la_0^2$};
				\node at (-3, 0) [below right] {$\left(-a^2, 0 \right)$};	
                     \draw[thick,domain=2:3,smooth,samples=100]
				plot (-\x/2, \x) node[right]{$z_0$};
                     \draw[thick,domain=-3:1,smooth,samples=100]
                               plot(\x, \x/2+2.5);	
\node at (-1, 1) [below]{$\Lambda(\Omega_a)$};
                                         \fill[pattern=dots, pattern color=blue]
    plot[domain=-3:3] (\x*\x/2-3, \x) -- 
    plot[domain=-3:3] (1.5, \x);
	\end{tikzpicture}

\begin{tikzpicture} \label{Diagram 3}
				\draw[->] (-3,0) -- (6,0) node[right] {$s$};
				\draw[->] (0,-6) -- (0,2) node[above] {$t$};
				
				\node[above right] at (0,0) {$(0,0)$};
		\draw[thick,domain=-6:0,smooth,samples=100]
				plot (\x*\x/2+3*\x+3, \x);
                     \draw[thick,domain=-1:0,smooth,samples=100]
				plot (-\x/2, \x);	

 \fill[pattern=dots, pattern color=blue]
    plot[domain=-6:0] (\x*\x/2+3*\x+3, \x) -- 
    plot[domain=-6:0] (3, \x);
\fill[black] (0.5,-1) circle(2pt);
\node at (0.3, -0.5) [right]{$\beta$};
\node at (1, -3)[right]{$\Gamma$};
\draw[thick,domain=-2:3,smooth,samples=100] plot(\x, \x/2-5/4);

	\end{tikzpicture}
\end{center}
    \caption{Point spectrum of $\D$ on $X_a$ and its translation by $z_0$.}
\end{figure}
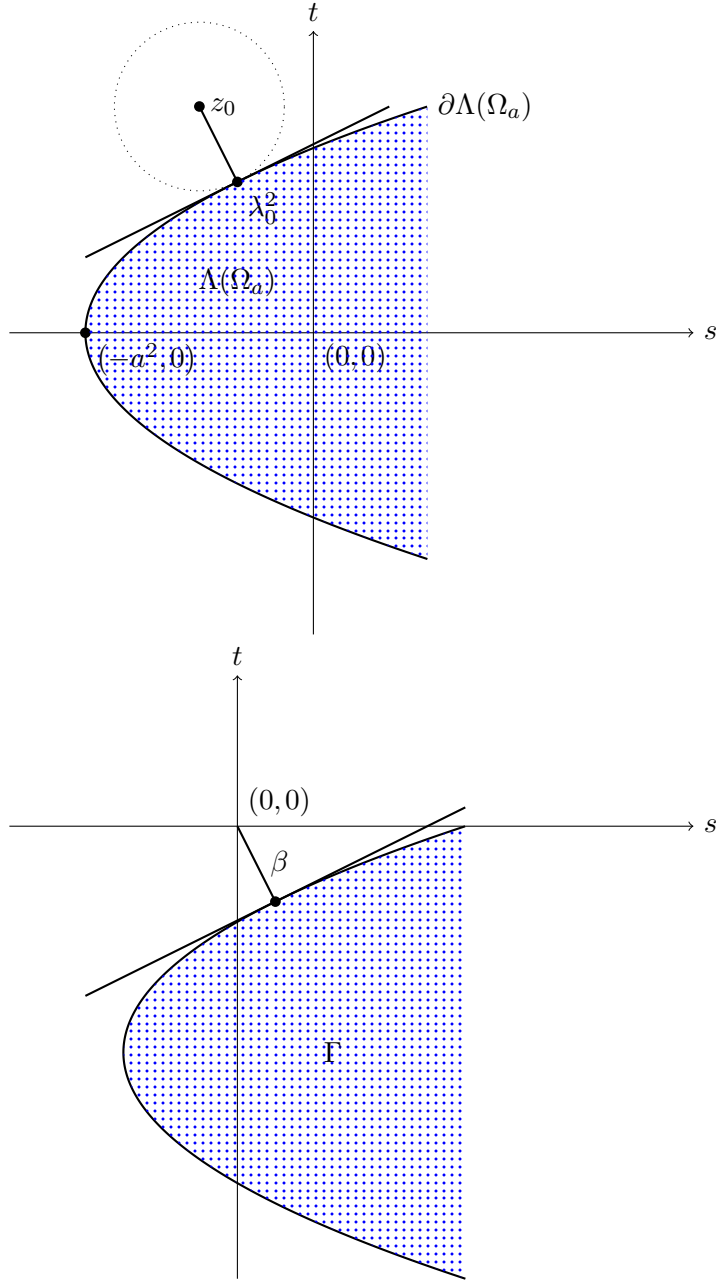

		Step 2: We shall now prove $N =0$ so that $(\la^2-\la_0^2)S_0 =0$ or equivalently $(\la^2-z_0)S_0 = (\la_0^2 - z_0)S_0$, where $S_0 = \Hc{T_0}$. It follows from \eqref{eq03} that 
		\begin{equation*}
			Span\{S_0, S_1, \ldots\}= Span\{S_0,(\la^2-z_0)S_0,\ldots, (\la^2-z_0)^N S_0\}= Span\{S_0,S_1, \ldots,S_N\}
		\end{equation*}
		Suppose that $(\la^2-\la_0^2)S_0 \neq 0$. Let $k_0$ be the largest positive integer such that $(\la^2-\la_0^2)^{k_0}S_0 \neq 0$. Then $k_0 \leq N$. Let $S=(\la^2 -\la_0^2)^{k_0-1}S_0 \in Span \{S_0,S_1, \ldots,S_N\}$. Therefore we assume $S = \sum_{j=1}^N a_jS_j$. Then 
		\begin{equation}\label{eqn8}
			(\la^2-\la_0^2)^2 S=0\quad \mbox{and} \quad (\la^2-\la_0^2)S \neq 0.
		\end{equation}
		
		Now, using \eqref{eqn8} and binomial expansion, we obtain \begin{equation*}
			(\la^2 -z_0)^k S= ((\la^2-\la_0^2)+(\la_0^2-z_0))^kS
			= (\la_0^2-z_0)^k S+ k (\la_0^2-z_0)^{k-1}(\la^2-\la_0^2)S.
		\end{equation*} 
		
		Hence for any $\phi \in H_e(\Omega_a)$,
		\begin{equation}\label{eqn9}
			|\langle (\la^2-\la_0^2)S, \phi  \rangle| \leq \frac{1}{k(\la_0^2-z_0)^{k-1}} |\langle (\la^2 -z_0)^k S, \phi \rangle | + \frac{1}{k} (\la_0^2-z_0)|\langle S, \phi \rangle|.
		\end{equation}
		Consider 
		\begin{align*}
			|\langle (\la^2-z_0)^k S, \phi \rangle| & =\left|\left\langle (\la^2-z_0)^k \sum_{j=0}^N a_jS_j, \phi \right\rangle \right| =\left |\left \langle \sum_{j=0}^N a_jA^kS_{j+k}, \phi \right\rangle \right|\\
			& = |A|^k\left|\left \langle \sum_{j=0}^N a_j S_{j+k}, \phi \right \rangle \right | \leq |\la_0^2-z_0|^k \sum_{j=0}^N \left |a_j\right| \left|\left \langle S_{j+k}, \phi \right\rangle \right| \\
			& \leq M |\la_0^2-z_0|^k \mu(\phi) \sum_{j=0}^N |a_j|.
		\end{align*}
		
		Thus \eqref{eqn9} and above inequality implies that,
		\begin{equation*}
			|\langle (\la^2-\la_0^2)S, \phi  \rangle| \leq \frac{M}{k} (\la_0^2-z_0) \mu(\phi) \sum_{j=0}^N |a_j| + \frac{1}{k} (\la_0^2-z_0)|\langle S, \phi \rangle|,
		\end{equation*}as the right hand side goes to 0 as $ k \rightarrow \infty$, we get $(\la^2-\la_0^2)^{k_0}S = 0$, which contradicts the assumption on $k_0$ from  \eqref{eqn8}. This proves  $N = 0$ and hence we get $ (\la^2-\la_0^2)S_0 =0 $. Finally by spherical Fourier inversion we obtain $\D T_0 = \la_0^2 T_0$.
		
		Next we shall prove the theorem when $T_k$'s are non-radial. Initially we shall see that given any sequence of non-radial exponential type tempered distributions $T_k$'s satisfying the conditions as mentioned in the theorem, then for any $y \in \R^d$, the corresponding sequence ${R\ell_yT_k}$ of radial distributions also satisfies the hypothesis. Above observation is an immediate consequence of the fact that $\D$ commutes with radialization and translations and thus we have $\D R(\ell_yT_k) = A R(\ell_yT_{k+1})$. It remains to show that for the seminorm $\gamma$ of $S^a(\R^d)$ in the hypothesis of the theorem and $\psi_1 \in \sar(\R^d)$, $$ | \langle R(\ell_yT_k), \psi_1 \rangle | \leq C_y M \gm(\psi_1).$$ For that consider any $\psi \in S^a(\R^d)$, 
		\begin{align*}
			\gm(\ell_y \psi) = \sup_{x \in \R^d} (1+ |x|)^m e^{a|x|} D^\ap \psi(x-y) & \leq \sup_{z \in \R^d} (1+ |y|+ |z|)^m e^{a(|y|+|z|)} D^\ap \psi(z) \\
			& \leq (1+|y|)^m e^{a|y|} \sup_{z \in \R^d} (1+|z|)^m e^{a|z|} D^\ap \psi(z) \\ & \leq C_y \gm(\psi)
		\end{align*}
		Since $|\langle T_k, \psi \rangle| \leq M \gm(\psi)$ for any $\psi \in S^a(\R^d),$ it follows that for any $\psi_1 \in \sar(\R^d)$\\
		$ |\langle R(\ell_y T_k), \psi_1 \rangle| = | \langle \ell_y T_k, \psi_1 \rangle | = | \langle T_k, \ell_{-y}\psi_1 \rangle| \leq M\gm (\ell_{-y}\psi_1) \leq C_{y^{-1}} M \gm (\psi_1) $.
		From the result proved for radial distributions, we conclude that $\D R(\ell_y T_0) = \la_0^2 R(\ell_y T_0)$, for every $y \in \R^d$.
		Again using the fact that $\D$ commutes with radializations and translations, we have $R\ell_y(\D T_0) = R\ell_y(\la_0^2 T_0)$, for every $y \in \R^d$.\\
	It is enough to prove that for any $T\in S^a(\R^d)'$, if $R(\ell_xT)=0$ for all $x \in \R^d$, then $T$ equals zero as a distribution. Indeed, if $R(\ell_xT)=0$ for all $x \in \R^d$, then $ \langle \ell_x T, h_t \rangle = 0$ for all $t > 0$ where $ h_t$ denotes the heat kernel, which is a radial function given by $ h_t(x) = \frac{1}{(4\pi t)^{d/2}} e^{\frac{-|x|^2}{4t}}$, for $t>0$. That is, $T * h_t \equiv0. $ But $T * h_t \to T$ as $t \to 0$ in the sense of distributions. Therefore $T=0$ and hence $\D T_0= \la_0^2 T_0$.

		Proof of part (b). Let $|A| < |\la_0^2-z_0|$.
		Already we have $|\la^2-z_0| \geq |\la_0^2-z_0|$, for every $\la \in \Omega_a$.
		Also we have for $\phi \in H_e(\Omega_a)$\\
		\begin{equation}
			|\langle \Hc{T_0}, \phi \rangle| \leq M \mu \bigg[\left(\frac{A}{\la^2-z_0}\right)^k \phi \bigg]
		\end{equation}
		But for  $\la \in \Omega_a$, \\
		\begin{align*}
			\bigg|\frac{A}{\la^2-z_0} \bigg| = \bigg| \frac{A}{\la_0^2-z_0} \bigg| \bigg| \frac{\la_0^2-z_0}{\la^2-z_0} \bigg| <1
		\end{align*}
		Therefore, $\mu \bigg[\left(\frac{A}{\la^2-z_0}\right)^k \phi \bigg] \to 0,$ as $k \to \infty$, which implies that $ | \langle \Hc T_0, \phi \rangle| =0,$ for every $\phi \in H_e(\Omega_a)$. Hence $T_0=0.$

		Proof of part(c). Consider $|A|> |\la_0^2 - z_0|$. It can be seen geometrically that, there exists distinct $\la', \la'' \in \Lambda(\Omega_{a})$ with $|(\la')^2 - z_0| = |A| = |(\la'')^2 - z_0|.$
		Now $(\la')^2 - z_0 = A e^{i\theta_1}$ and $(\la'')^2 - z_0 = A e^{i\theta_2}$ for $\theta_1, \theta_2 \in \R$, where $\theta_1 \neq \theta_2$. Define $$T_k = e^{ik\theta_1} \phi_{\la'} + e^{ik\theta_2} \phi_{\la''} $$ for $k \in \Z^{+}.$ Routine calculations shows that $T_k \in S^a(\R^d)'$ and $T_k$ satisfies hypothesis of main theorem \ref{Theorem 2.1}, but $T_0$ is not an eigendistribution of $\D$. 
	\end{proof}

\subsection{Proof of Theorem \ref{Theorem1.5}}

Having characterized the eigendistributions of the Laplacian, our task of proving Theorem \ref{Theorem1.5} can be accomplished once we establish the fact that functions $f$ on $\R^d$ satisfying the property that $ |f(x)| \leq M e^{a |x|} $ for a constant $M>0$ and for every $x \in \R^d$ are themselves exponential tempered distributions of type $a$, which can be seen as follows.

	For $g \in S^a(\R^d)$, 
\begin{align*}
	|\langle f,g \rangle| &=\left |\int_{\R^d} f(x)g(x)dx \right|\\
	& \leq M\left (\int_{\R^d} \frac{(1+|x|)^m e^{a|x|} |g(x)|}{(1+|x|)^m} dx \right )  \\ & \leq M' \gm_{m, 0}(g)   \Big(\int_{\R^d}\frac{1}{(1+|x|)^{m}} dx \Big).
\end{align*} The integral in the last line of the above is finite for sufficiently large $m \geq \frac{d}{2}$. Hence $f \in S^a(\R^d)'$ and thus Theorem \ref{Theorem1.5} is a consequence of Theorem \ref{Theorem 2.1}.

\section{Roe's characterization of exponential function} \label{section3}
In this section, we shall prove a characterization \ref{Theorem 1.2} for the eigenfunctions of the ordinary derivative $\frac{d}{dx}$ having exponential growth under the same framework as we have defined in section \ref{section2}.\\ 


Now in order to prove Theorem \ref{Theorem 1.2}, we shall use the same strategy used in proving Theorem \ref{Theorem1.5}. In view of the geometry of point spectrum of $\frac{d}{dx}$ on $X_a$ as depicted in Figure 3, we shall initially formulate a generalized version of Theorem \ref{Theorem 1.2} for the exponential tempered distributions of type $a$ on $\R$ as given below.
\begin{figure}[h!]
\begin{center}
	\begin{tikzpicture} \label{Diagram 4}
				\draw[->] (-3,0) -- (3,0) node[right] {$s$};
				\draw[->] (0,-3) -- (0,3) node[above] {$t$};
				
				\node[below right] at (0,0) {$(0,0)$};
\draw[thick, domain=-2:2,smooth,samples=100] plot(\x,1.5);
\draw[thick, domain=-2:2,smooth,samples=100] plot(\x,-1.5);
\fill[pattern=dots, pattern color=blue] 
plot[domain=-2:2] (\x,1.5) 
-- plot[domain=-2:2] (\x, -1.5)
-- plot[domain=-1.5:1.5] (2,\x)
-- plot[domain=-1.5:1.5] (-2,\x);
\fill[black] (0, 1.5) circle(2pt);
\fill[black] (0, -1.5) circle(2pt);
\node at (0,1.5)[above right]{$ia$};
\node at (0, -1.5)[below right]{$-ia$};
\fill[black] (1,1.5) circle(2pt);
\node at (1,1.5)[above]{$\lambda_0$};
		
	\end{tikzpicture} \begin{tikzpicture} \label{Diagram 5}
\draw[->] (-3,0) -- (3,0) node[right] {$s$};
				\draw[->] (0,-3) -- (0,3) node[above] {$t$};
				
				\node[below right] at (0,0) {$(0,0)$};
\draw[thick, domain=-2:2,smooth,samples=100] plot(1.5,\x);
\draw[thick, domain=-2:2,smooth,samples=100] plot(-1.5,\x);
\fill[pattern=dots, pattern color=blue] 
plot[domain=-2:2] (1.5,\x) 
-- plot[domain=-2:2] (-1.5,\x)
-- plot[domain=-1.5:1.5] (\x,2)
-- plot[domain=-1.5:1.5] (\x, -2);
\fill[black] (1.5, 0) circle(2pt);
\fill[black] (-1.5,0) circle(2pt);
\node at (1.5,0)[above right]{$a$};
\node at (-1.5, 0)[below right]{$-a$};
\fill[black] (-1.5,1) circle(2pt);
\node at (-1.5,1)[right]{$i\lambda_0$};
\draw[thick,domain=-2.5:-1.5,smooth,samples=100]plot(\x,1);
\node at (-2.5,1)[left]{$i\lambda_0-\ap$};
\fill[black] (-2.5,1) circle(2pt);

	\end{tikzpicture}
\end{center}

\caption{The strip $\Omega_a$ and the point spectrum of $\frac{d}{dx}$ on $X_a$.}
\end{figure}
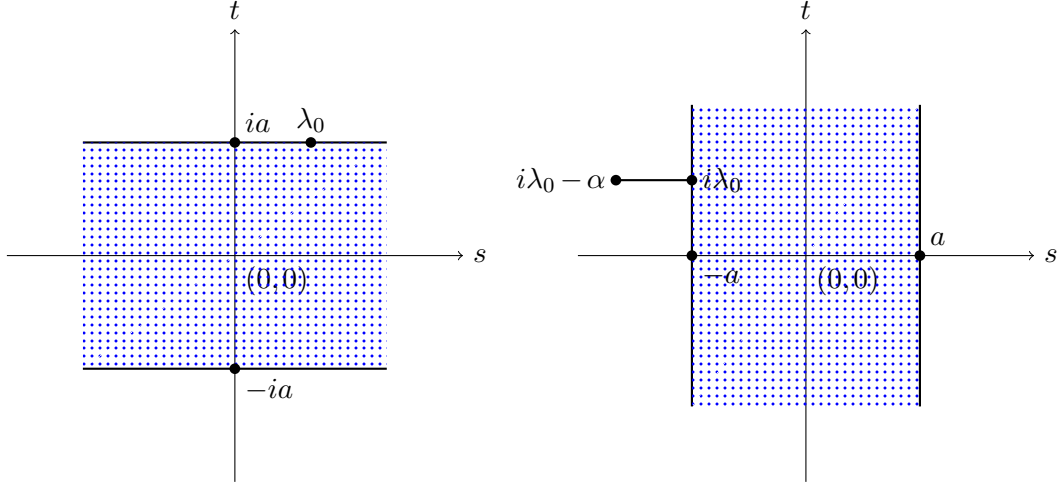

\begin{theorem}\label{Theorem 4.1}
	\par 	For any $\la_0 \in \C$ with $a=|\im(\la_0)| \neq 0$, let $\{T_k\}_{k \in \mathbb{Z}}$ be a doubly infinite sequence of exponential tempered distributions of type $a$ on $\R$ satisfying 
\begin{enumerate}
	\item For some non-zero $A \in \C $ and $\alpha>0$, we have $\begin{cases}(\frac{d}{dx} - i\la_0 + \ap)T_k = AT_{k+1},&\mbox{ if}~ \im(\la_0)>0,\\  (\frac{d}{dx} - i\la_0 - \ap)T_k = AT_{k+1},&\mbox{ if}~ \im(\la_0)<0,\end{cases}$
	\item  There exist a seminorm $\gm$ on    $S^a(\R)$ and a constant $M>0$ such that $|\langle T_k, f \rangle | \leq M \gm(f)$ for all $f \in S^a(\R)$ and for all $k \in \Z $. 
\end{enumerate} Then the following assertions hold.
\begin{enumerate}[(a)]
	\item If $|A| = \ap$, then $\frac{d}{dx} T_0 = i{\la_0} T_0$.
	\item If $|A| < \ap$, then $T_k = 0,$ for all $k \in \Z$. 
	\item There are solutions satisfying conditions (1) and (2) which are not eigendistributions of $\frac{d}{dx}$ when $|A| >  \ap$.
\end{enumerate}
\end{theorem}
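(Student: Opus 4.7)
The plan is to adapt the Fourier analytic strategy of Theorem \ref{Theorem 2.1} to this one dimensional first order setting, replacing the spherical Fourier transform with the ordinary Fourier transform $\mathcal{F}T(\xi)=\int_{\R}T(x)e^{-i\xi x}\,dx$. A Paley--Wiener type theorem for exponential decay shows that $\mathcal{F}$ is a topological isomorphism between $S^a(\R)$ and the space $H(\Omega_a)$ of holomorphic functions on the closed strip $\Omega_a=\{\xi\in\C:|\im(\xi)|\leq a\}$ rapidly decaying in $\re(\xi)$, and this extends to the dual spaces exactly as in Subsection \ref{subsection2.1.2}. In the case $\im(\la_0)=a>0$ the operator $\frac{d}{dx}-i\la_0+\ap$ becomes multiplication by the symbol $P(\xi)=i(\xi-\la_0)+\ap$ under $\mathcal{F}$, and the recursion iterates to $P(\xi)^k\mathcal{F}T_0=A^k\mathcal{F}T_k$. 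Writing $\xi-\la_0=u+iv$ with $v=\im(\xi)-a\in[-2a,0]$ on $\Omega_a$, a direct computation gives $|P(\xi)|^2=(\ap-v)^2+u^2\geq\ap^2$ with equality precisely at $\xi=\la_0$; this is the one dimensional analogue of the parabola inequality $|\Lambda(\la)-z_0|\geq|\la_0^2-z_0|$ driving Theorem \ref{Theorem 2.1}. The case $\im(\la_0)<0$ is identical after changing the symbol to $P(\xi)=i(\xi-\la_0)-\ap$, whose minimum on $\Omega_a$ is again $\ap$, attained only at $\la_0$.

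Granted this minimum modulus estimate, parts (b) and (c) follow almost verbatim from the corresponding parts of Theorem \ref{Theorem 2.1}. For (b), $|A|<\ap$ gives $|A/P(\xi)|\leq|A|/\ap<1$ uniformly on $\Omega_a$, and so for any $\phi\in H(\Omega_a)$ the chain $|\langle\mathcal{F}T_0,\phi\rangle|=|\langle\mathcal{F}T_k,(A/P)^k\phi\rangle|\leq M'\mu\bigl((A/P)^k\phi\bigr)\to 0$ forces $T_0=0$; the same argument applied to each $T_j$ gives $T_k=0$ for every $k$. For (c), with $|A|>\ap$ one picks two distinct boundary points $\la',\la''\in\partial\Omega_a$ satisfying $|P(\la')|=|P(\la'')|=|A|$, writes $P(\la')=Ae^{i\theta_1}$ and $P(\la'')=Ae^{i\theta_2}$ with $\theta_1\neq\theta_2$, and sets $T_k(x)=e^{ik\theta_1}e^{i\la'x}+e^{ik\theta_2}e^{i\la''x}$; these belong to $S^a(\R)'$, satisfy the recursion, but $T_0$ is not an eigendistribution of $\frac{d}{dx}$.

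Part (a) is the main content and is proved in two steps mirroring Theorem \ref{Theorem 2.1}. In Step 1, for a seminorm $\mu$ on $H(\Omega_a)$ of derivative order $\tau$, the goal is to show $(\xi-\la_0)^{N+1}\mathcal{F}T_0=0$ with $N=6\tau+1$. Pairing with a test function $\phi$ and expanding via the Leibniz rule reduces the problem to the uniform vanishing on $\Omega_a$ of $k^\tau|A/P(\xi)|^k|\xi-\la_0|^{N+1-\tau}$ as $k\to\infty$. Outside a fixed compact neighbourhood $\mathcal{U}$ of $\la_0$ the ratio $|A/P(\xi)|$ is bounded by a constant strictly less than $1$, yielding geometric decay. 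Inside $\mathcal{U}$ we split further into the shrinking square $V_k=\{\xi:|\re(\xi-\la_0)|<k^{-1/4},\ |\im(\xi-\la_0)|<k^{-1/4}\}$ and its complement in $\mathcal{U}$: on the complement the estimate $|P(\xi)|^2\geq\ap^2+ck^{-1/2}$ gives decay $(1+c'/\sqrt{k})^{-k/2}\to 0$ that swallows the $k^\tau$ prefactor, while on $V_k$ the polynomial factor $|\xi-\la_0|^{N+1-\tau}\leq Ck^{-(5\tau+2)/4}$ combined with $k^\tau$ gives $k^{-(\tau+2)/4}\to 0$. Step 2 reduces $N$ to zero algebraically as in Theorem \ref{Theorem 2.1}: if $(\xi-\la_0)^{k_0}\mathcal{F}T_0\neq 0$ for some minimal $1\leq k_0\leq N$, set $S=(\xi-\la_0)^{k_0-1}\mathcal{F}T_0$, expand $P(\xi)^k=\ap^k+ik\ap^{k-1}(\xi-\la_0)+\mathcal{O}((\xi-\la_0)^2)$, and use $(\xi-\la_0)^2S=0$ together with the uniform seminorm bound on $\mathcal{F}T_k$ to conclude $(\xi-\la_0)S=0$, contradicting the minimality of $k_0$. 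Hence $(\xi-\la_0)\mathcal{F}T_0=0$, so $\mathcal{F}T_0$ is a constant multiple of $\delta_{\la_0}$, whence $T_0=c\,e^{i\la_0 x}$ and $\frac{d}{dx}T_0=i\la_0T_0$.

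The main obstacle is executing Step 1 of part (a), namely the uniform decay of $k^\tau|A/P(\xi)|^k|\xi-\la_0|^{N+1-\tau}$ via the $V_k$ versus $V_k^c$ splitting; this is a close transcription of the parabola neighbourhood argument in Theorem \ref{Theorem 2.1}, the only essential novelty being the simpler symbol geometry (a shifted half plane rather than a parabolic region). Once this estimate is in place, the algebraic reduction in Step 2 and the symmetric handling of $\im(\la_0)<0$ are routine, and Theorem \ref{Theorem 1.2} follows from Theorem \ref{Theorem 4.1} just as Theorem \ref{Theorem1.5} was deduced from Theorem \ref{Theorem 2.1} at the end of Subsection 2.3.
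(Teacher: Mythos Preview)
Your proposal is correct and follows the same overall architecture as the paper's proof: transport the recursion to the Fourier side, use the minimum modulus property of the symbol at $\la_0$, carry out the $V_k$/$V_k^c$ splitting to kill the seminorm for large $k$, and then run the binomial reduction of Step~2 to bring $N$ down to $0$. The estimates you write for $|P(\xi)|$ and the two regimes on $\mathcal U$ are exactly the one dimensional analogues of \eqref{eq07}--\eqref{eq08}.

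The one genuine difference is in the choice of transform. The paper sticks to the spherical Fourier transform $\mathcal H$ and the even space $H_e(\Omega_a)$, so it first assumes the $T_k$ are radial and then (implicitly, via ``rest follows similarly'') appeals to the radialization--translation trick from the end of the proof of Theorem~\ref{Theorem 2.1} to pass to general distributions. You instead invoke the full one variable Fourier transform $\mathcal F:S^a(\R)\to H(\Omega_a)$, which is already a topological isomorphism without any parity restriction, and this lets you treat arbitrary $T_k$ in one pass. Your route is cleaner here, and it also sidesteps a delicate point in the paper's scheme: the first order operator $\frac{d}{dx}$ does not commute with the radialization (even part) map on $\R$, so the ``assume $T_k$ radial'' reduction is not as automatic for $\frac{d}{dx}$ as it was for $\Delta$. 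One small cosmetic remark: from $(\xi-\la_0)\mathcal F T_0=0$ you already have $\frac{d}{dx}T_0=i\la_0 T_0$ directly by the isomorphism; the further identification of $\mathcal F T_0$ with a multiple of $\delta_{\la_0}$ is not needed (and would require an extra argument in the dual of $H(\Omega_a)$).
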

 For the sake of brevity, we will be providing only the first few steps of the proof of above theorem, as it follows the same line of argument as in the proof of Theorem \ref{Theorem 2.1}. 
\begin{proof} Part(a). First let us assume the distributions $\{T_k\}$'s be radial. Let $|A| = |\ap|$ and $a=\im(\la_0) > 0$. In order to establish $\frac{d}{dx} T_0 = i{\la_0} T_0$, it is enough for us to show that $(i\la-i\la_0)\Hc{T_0}=0$.
So initially we must show that \begin{equation}
	(i\la-i\la_0)^{N+1}\Hc{T_0}=0.
\end{equation} It is easy to verify that, for any $\phi \in H_e(\Omega_{a})$, we obtain
$$ \langle (i\la-i\la_0)^{N+1}\Hc{T_0}, \phi \rangle \leq M \mu \bigg[\left( \frac{A}{\la- \la_0-i\ap}\right)^k (\la-\la_0)^{N+1} \phi \bigg] $$ where $\mu$ is a seminorm given by $\mu (\phi) = \sup_{\la \in \Omega_{a}}|\frac{d^\tau}{d\la^\tau} P(\la) \phi(\la)|$ for some polynomial $P(\la)$. Now for a fixed $N=6\tau +1$, let us denote $$ F^k(\la)= \left| \frac{d^\tau}{d\la^\tau} P(\la) \left(\frac{A}{\la- \la_0-i\ap}\right)^k (\la-\la_0)^{N+1} \phi \right|. $$ Our aim is to show that $\sup_{\la \in \Omega_{a}} F^k(\la) \to 0$ as $k \to \infty$. Note that

 \begin{multline} \frac{d^\tau}{d\la^\tau} \left(P(\la) \left(\frac{A}{\la- \la_0-i\ap}\right)^k (\la-\la_0)^{N+1} \phi \right) = \sum_{l+m+n= \tau} C_{lmn} \frac{d^l}{d\la^l} \left(\frac{A}{\la- \la_0-i\ap}\right)^k \\ \times \frac{d^m}{d\la^m} (\la-\la_0)^{N+1}\frac{d^n}{d\la^n} (P(\la)\phi)
\end{multline}

Using the fact that $\phi \in H_e(\Omega_{a}) $ and $\left| \frac{A}{\la- \la_0-i\ap }\right| = \left|\frac{\ap}{\la- \la_0-i\ap} \right| \leq 1$ for $ \la \in \Omega_{a}$, we obtain the following two inequalities where $C_1$ and $C_2$ are some arbitrary constants. 
\begin{equation}
	F^k(\la) \leq C_1 k^{\tau} \left|\frac{\ap}{\la- \la_0-i\ap} \right|^k |\la-\la_0|^{N+1-\tau}
\end{equation}

\begin{equation}\label{eq15}
	F^k(\la) \leq C_2 k^{\tau} \left|\frac{\ap}{\la- \la_0-i\ap} \right|^k
\end{equation} 
Now consider a compact connected neighborhood $\mathcal{U}$ of $\la_0$ in $\Omega_{a}$ such that if $\la \notin \mathcal{U}$, then $\left|\frac{\ap}{\la- \la_0-i\ap} \right| < \frac{1}{2}$. It follows from equation \ref{eq15}, that $F^k(\la) \to 0$ uniformly in $\Omega_{a} \setminus \mathcal{U}$. Next we shall show that $ \sup_{\la \in \mathcal{U}} k^{\tau}  \left|\frac{\ap}{\la- \la_0-i\ap} \right|^k |\la-\la_0|^{5\tau+2} \to 0$, as $k \to \infty$.
Since  $$\sup_{\la \in \mathcal{U}} k^{\tau}  \left|\frac{\ap}{\la- \la_0-i\ap} \right|^k |\la-\la_0|^{5\tau+2} =  \sup_{\la \in \mathcal{U}} k^{\tau}  \left|\frac{\la_0 -(\la_0+i\ap)}{\la- (\la_0+i\ap)} \right|^k \left|[\la-(\la_0+i\ap)] - [\la_0-(\la_0+i\ap)]\right|^{5\tau+2}, $$ this is equivalent to prove that $$\sup_{z \in \Omega} k^{\tau} \left| \frac{\bt}{z} \right|^k |\bt-z|^{5\tau+2} \to 0, $$ where $\Omega$ is a rectangular compact region containing $\bt$.\\   Rest of the proof (including Part (b) and Part (c)) follows simliarly as in the proof of Theorem\ref{Theorem 2.1}.
\end{proof}

\subsection{Proof of Theorem \ref{Theorem 1.2}}

Since we already established that any function on $\R$ satisfying the condition: $|f(x)| \leq M e^{a|x|} $ for all $x\in \R$, are exponential tempered distributions of type $a$ on $\R$, it is straight forward to see that Theorem \ref{Theorem 1.2} is an immediate consequence of Theorem \ref{Theorem 4.1}.




\section*{Acknowledgment}
The first author would like to thank UGC-CSIR, India for providing the fellowship for the completion of this work.


\begin{thebibliography}{}
	
%
\bibitem{natan} {Y. Ben Natan, Y. Weit}, \textit{Integrable harmonic functions on $\R^n$}, {J.  Funct. Anal.}, {150}, {1997}, {471-477}.	


	\bibitem{Howard} {Howard, Ralph and Reese, Margaret}, 	\textit{Characterization of eigenfunctions by boundedness conditions}, {Canadian Mathematical Bulletin}, {35}, {1992}, {204--213}.
	
	\bibitem{howard} {Howard, Ralph}, \textit{A note on {R}oe's characterization of the sine function}, {Proceedings of the American Mathematical Society}, {105}, {1989}, {658--663}.


\bibitem{pkumar}
	{Kumar, Pratyoosh and Ray, Swagato K. and Sarkar, Rudra P.}, 	\textit{Characterization of almost {$L^p$}-eigenfunctions of the {L}aplace-{B}eltrami operator}, {Transactions of the American Mathematical Society}, {366}, {2014}, {3191--3225}.
	
	\bibitem{muna} Naik, M., Sarkar, R. P., \textit{Characterization of eigenfunctions of the Laplace-Beltrami operator using Fourier multipliers}. J. Funct. Anal. 279,  2020.
	
	\bibitem{niki}{A. V. Nikiforov and V.B. Uvarov}, \textit{Special functions of mathematical physics: A unified introduction with applications}, {Birkh\"auser}, {1988}.
	
\bibitem{Roe} 
	{Roe, J.},
	\textit{A characterization of the sine function},
	{Mathematical Proceedings of the Cambridge Philosophical
		Society}, {87}, {1980}, {69--73}.
	


\bibitem{Str}
	{Strichartz, Robert S.},
	\textit{Characterization of eigenfunctions of the {L}aplacian by
		boundedness conditions},
	{Transactions of the American Mathematical Society},
	{338}, {1993}, {971--979}.	
	
	
	
\end{thebibliography}
\end{document}